\newtheorem{theorem}{Theorem}
\newtheorem{corollary}[theorem]{Corollary}
\newtheorem{lemma}[theorem]{Lemma}  
\newtheorem{proposition}[theorem]{Proposition}
\newtheorem{definition}{Definition}
\newcommand{\bp}{\mathbb{ P}}
\newcommand{\bz}{\mathbb{Z}}
\newcommand{\br}{\mathbb{R}}
\newcommand{\bq}{\mathbb{Q}}
\newcommand{\bh}{\mathbb{H}}
\newcommand{\ce}{\mathcal{E}}
\newcommand{\cl}{\mathcal{L}}
\newcommand{\cs}{\mathcal{S}}
\newcommand{\cg}{\mathcal{G}}
\newcommand{\cp}{\mathcal{P}}
\newcommand{\ch}{\mathcal{H}}
\newcommand{\cf}{\mathcal{F}}
\newcommand{\ca}{\mathcal{A}}
\newcommand{\hk}{\hookrightarrow}
\newcommand{\bg}{\bigskip}
\newcommand{\med}{\medskip}
\newcommand{\bfl}{\begin{flushleft}}
\newcommand{\efl}{\end{flushleft}}
\newcommand{\xr}{\xrightarrow}
\newcommand{\G}{\Gamma}
\newcommand{\ltm}{LM^{-TM}}
\newcommand{\pud}{P^{Ad}}
\newcommand{\cpm}{\cp(M)}
\newcommand{\sgp}{ \Sigma^\infty (\cg (P)_+) }
\newcommand{\fsm}{\Sigma^\infty_M}
\newcommand{\cgsp}{hAut(\fsm (P_+))}
\begin{document}

  \title{Gauge theory and string topology}  
  \author{Ralph L. Cohen \thanks{The first author was partially supported by a  grant  from the NSF.} \\ Department of Mathematics \\Stanford University \\ Bldg. 380 \\ Stanford, CA 94305, USA \and  John D.S Jones \\ Mathematics Institute \\Zeeman Building \\ Warwick University\\ Coventry, CV4 7AL, UK }
\date{\today}
\maketitle  
 \begin{abstract}   Given a principal bundle over a closed manifold,  $G \to P  \to M$,  let $P^{Ad} \to M$ be the associated adjoint bundle.  In \cite{gruhersalvatore} Gruher and Salvatore showed that the Thom spectrum $(P^{Ad})^{-TM}$ is a ring spectrum whose corresponding    product  in homology  is a Chas-Sullivan type string topology product.   We refer to this spectrum as the ``\it{string topology spectrum of P}", $ \cs (P)$.   \rm   In the  universal  case  when $P$ is contractible,  $\cs (P) \simeq LM^{-TM}$  where $LM$ is the free loop space of the manifold.  This ring spectrum was introduced by the authors  in \cite{cohenjones}  as a homotopy theoretic realization of the Chas-Sullivan string topology of $M$.  The main purpose of this paper is to introduce an action of the gauge group of the principal bundle, $\cg (P)$ on the string topology spectrum $\cs(P)$, and to study this action in detail.  Indeed we study the entire group of units and the induced representation $\cg (P) \to  GL_1(\cs (P))$. 
   We show that this group  of units is the group of homotopy automorphisms of the fiberwise suspension spectrum of $P$.   More generally we describe the homotopy type of the group of homotopy automorphisms of any $E$-line bundle for any ring spectrum $E$.    We import some of the basic ideas of gauge theory, such as the action of the gauge group on the space of connections to the setting of   $E$-line bundles  over a manifold, and do explicit calculations.    We end by discussing a functorial perspective, which describes
 a sense in which the string topology spectrum $\cs (P)$  of a principal bundle is  the ``\sl{linearization}" \rm of the gauge group $\cg (P)$. 
 \end{abstract}

 \tableofcontents

 \section*{Introduction}      Let $M$ be a closed, smooth, connected $n$-dimensional manifold with a basepoint $x_0 \in M$.   Let $P$ be a principal $G$-bundle over $M$ where $G$  is a topological group.  Let $\cg (P)$ be the  gauge group of $P$.  That is,  $\cg (P)$ is the group of continuous automorphisms of the principal bundle $P$ that    cover  the identity map of $M$.     The gauge group $\cg (P)$ has the following description.   Let $\pud$ be the associated adjoint bundle,
 $$
 G \to \pud = P \times_G G  \to M
 $$
 where $G$ acts on itself by conjugation.  $\pud$ is a fiberwise group, in the sense that there is an associative pairing of bundles over $M$,  
 $$
 \begin{CD}
 G \times G    @>mult >>  G \\
 @VVV    @VVV \\
 \pud \times_M \pud @>\mu >>  \pud \\
 @VVV   @VVV \\
 M  @>>=> M.
 \end{CD}
 $$
   The space of  sections $\G_M(\pud)$ is   a group, and is clearly isomorphic to the gauge group $\cg (P)$.  (See \cite{atiyahbott} for a general discussion of the topology of the gauge group.)   The goal of this paper is to relate the topology of such gauge groups to  string topology as introduced by Chas and Sullivan \cite{chassullivan}.
 
 To be more precise,  recall that since $\pud \to M$ is a fiberwise group, then if $(\pud)^{-TM}$   denotes the Thom spectrum of the virtual bundle defined by pulling back $-TM \to M$ over $\pud$,   then Gruher and Salvatore \cite{gruhersalvatore} showed that $(\pud)^{-TM}$ is a ring spectrum.  Furthermore if $M$ is oriented,    that  product structure induces an algebra structure on $H_{*+n}(\pud)$, very much analogous to the Chas-Sullivan string topology algebra structure on $H_{*+n}(LM)$, the (shifted) homology of the free loop space.  We refer to $(\pud)^{-TM}$ as the ``string topology spectrum" $\cs (P)$.  
 
  Actually  the relation between the string topology spectrum $\cs (P)$ and the  Chas-Sullivan structure on $LM$ is more than an analogy.  Indeed the  Chas-Sullivan structure is a special case of this construction.  Namely,  let $\cpm \to M$ be any principal bundle where the total space $\cpm$ is contractible. We view $\cpm \to M$ as a \sl universal \rm principal bundle over $M$. Such a bundle has the fiber homotopy type of the path loop fibration $\Omega M \to P(M) \xr{ev} M$, where $P(M) = \{ \alpha : [0, 1] \to M \quad \text{such that} \quad \alpha (0) = x_0 \in M \}$.  Here $ev (\alpha) = \alpha (1)$.  Then it is well known that the corresponding adjoint bundle is homotopy equivalent to the free loop space,  $\cpm^{Ad} \simeq LM$.  So $(\cpm^{Ad})^{-TM} \simeq LM^{-TM}$ which in this case is an $E_2$ ring spectrum that reflects the Chas-Sullivan Gerstenhaber structure on $H_{*+n}(LM)$.   
 
 \med
 In  this paper we introduce an action of the gauge group $\cg (P)$ on the string topology spectrum $\cs (P)$.  This defines a representation
 $\cg (P) \to   GL_1(\cs (P))$.  We study this representation in detail.   In so doing we will show that the group of units $GL_1(\cs (P))$  is equivalent to  the group of homotopy automorphisms of the fiberwise suspension spectrum $\Sigma^\infty_M(P_+)$.   This is a parameterized   spectrum whose fibers are rank one, free modules over the    suspension spectrum of the group,  $\Sigma^\infty (G_+)$.  Indeed, given any ring spectrum $E$,  we consider parameterized spectra of free, rank one $E$-modules,  ``$E$-line bundles", and   we study the homotopy type  its group of  homotopy automorphisms,  $hAut (\ce)$.   The above representation of the gauge group can then be viewed as a homomorphism of group-like monoids,     $$
 \iota : \cg (P) \to  hAut (\Sigma^\infty_M (P)).
 $$

 \med
 
 We now state our results more carefully.  First we establish some terminology.

 Let $E $ be a ring spectrum.   The notion of  a parameterized spectrum of free, rank one $E$-modules (``$E$-line bundle") was described and studied in  \cite{5author},      We define the spectrum $End (\ce)$ to be the (ordinary) spectrum of endomorphisms of $\ce$, living over the identity of $M$.  This spectrum will be defined more carefully below.  
  We then define 
 $$
 hAut (\ce) = GL_1(End (\ce)).
 $$
 
 The following is the first result, which is an application of fiberwise Poincare-Atiyah duality.
 
 \begin{proposition}\label{stringend}
 Let $G \to P \to M$ be a principal bundle, and let $\Sigma^\infty (G_+) \to \Sigma^\infty_M (P_+) \to M$ be the induced  $\Sigma^\infty(G_+)$-line bundle given by
 the fiberwise suspension spectrum of $P$.   Then there is an equivalence of ring spectra,
 $$
 End (\Sigma^\infty_M (P_+)) \simeq \cs (P) \simeq (\pud)^{-TM}.
 $$  
 \end{proposition}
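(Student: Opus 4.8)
The plan is to compute $End(\Sigma^\infty_M(P_+))$ directly from the definition of the endomorphism spectrum of a parameterized spectrum, and to identify the result with the Thom spectrum $(\pud)^{-TM}$ via fiberwise Atiyah duality. The starting point is the observation that $\Sigma^\infty_M(P_+)$ is the fiberwise suspension spectrum of the fibration $G \to P \to M$, so over each point $x \in M$ its fiber is $\Sigma^\infty(G_+)$, a free rank-one module over the ring spectrum $\Sigma^\infty(G_+)$ itself. The endomorphism spectrum $End(\ce)$ of an $E$-line bundle $\ce$ over $M$ is, by its construction in \cite{5author}, the spectrum of fiberwise $E$-module maps of $\ce$ to itself covering the identity of $M$; equivalently it is obtained as a global sections / fiberwise mapping spectrum $\operatorname{Map}_M(\ce, \ce)$, and because $\ce$ is fiberwise free of rank one this is the fiberwise smash product of the fiberwise dual $\ce^\vee$ with $\ce$, pushed forward to a spectrum over a point.

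First I would make this precise for $\ce = \Sigma^\infty_M(P_+)$. Fiberwise Atiyah duality — more precisely, the fiberwise Poincar\'e--Atiyah duality for the fibration $P \to M$ relative to the closed manifold $M$ — identifies the fiberwise dual of $\Sigma^\infty_M(P_+)$ over $M$ with the fiberwise Thom spectrum of $-T^{fib}$ twisted appropriately; the net effect after pushing forward along $M$ is that $\operatorname{Map}_M(\Sigma^\infty_M(P_+), \Sigma^\infty_M(P_+))$ becomes the Thom spectrum over $M$ of the virtual bundle pulled back from $-TM$ along the total space of the fiberwise ``mapping torus'' of $P$ with itself. The second step is to identify that total space: maps $G \to G$ of the fiber covering the identity, assembled over $M$, give exactly the adjoint bundle $\pud = P \times_G G$ — here the $G$ acting by conjugation appears precisely because an endomorphism of the free rank-one $\Sigma^\infty(G_+)$-module is given by right multiplication by a group element, and the way these patch over $M$ using the $G$-action on $P$ produces the conjugation action. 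Hence $End(\Sigma^\infty_M(P_+))$ is the Thom spectrum of $-TM$ pulled back to $\pud$, which is by definition $(\pud)^{-TM} = \cs(P)$.

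The third step is to check that this identification is one of ring spectra, not merely of underlying spectra. The ring structure on $End(\ce)$ is composition of endomorphisms; on the Thom spectrum side, Gruher and Salvatore \cite{gruhersalvatore} produce the ring structure on $(\pud)^{-TM}$ from the fiberwise multiplication $\mu : \pud \times_M \pud \to \pud$ together with the Pontryagin--Thom collapse for the diagonal $M \hookrightarrow M \times M$ (which supplies the $-TM$ twist). I would verify that under the Atiyah-duality identification, composition of fiberwise module endomorphisms translates exactly into this Gruher--Salvatore string-topology product: composing ``multiplication by $g$'' with ``multiplication by $h$'' corresponds to ``multiplication by $gh$'', which is precisely $\mu$, and the duality shifts needed to realize $\operatorname{Map}_M$ as a Thom spectrum are exactly the collapse maps for the diagonal. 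This is the step I expect to be the main obstacle: both ring structures are defined through a sequence of fiberwise duality and collapse maps, and matching them requires carefully tracking the Thom isomorphisms and the direction of the duality equivalences so that the associativity data line up. Once the multiplications are matched on the nose (or up to coherent homotopy), the final equivalence $\cs(P) \simeq (\pud)^{-TM}$ is just the definition recalled in the introduction, completing the proof.
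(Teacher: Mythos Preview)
Your proposal has the right ingredients but applies them in the wrong places, and in particular invokes the wrong duality at the key step. The fiberwise endomorphism spectrum $End_M(\Sigma^\infty_M(P_+))$ requires no Atiyah duality whatsoever: over each $x\in M$ you are computing the $\Sigma^\infty(G_+)$-module endomorphisms of a free rank-one $\Sigma^\infty(G_+)$-module, which is just $\Sigma^\infty(G_+)$ again (acting on itself by right multiplication). The clutching data for $P$ then acts by conjugation, so $End_M(\Sigma^\infty_M(P_+))\simeq \Sigma^\infty_M(P^{Ad}_+)$ as fiberwise ring spectra --- this is the ``straightforward exercise'' the paper records as Proposition~\ref{endo}. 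Your identification of $P^{Ad}$ via the conjugation action is exactly this step, but it is purely formal; no $-T^{fib}$ or fiberwise Spanier--Whitehead dual enters, and indeed $G$ is only assumed to be a topological group, so there need be no fiber tangent bundle at all.

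The $-TM$ twist arises only \emph{after} this identification, when one passes from the parameterized spectrum over $M$ to an ordinary spectrum by taking sections: $End(\Sigma^\infty_M(P_+))=\Gamma_M(\Sigma^\infty_M(P^{Ad}_+))$, and now Poincar\'e--Atiyah duality for the closed base manifold $M$ (the paper's Theorem~\ref{poincare} and Corollary~\ref{fiber}) gives $\Gamma_M(\Sigma^\infty_M(P^{Ad}_+))\simeq (P^{Ad})^{-TM}$. So the duality is for $M$, not for $P$ or its fibers, and it converts a section spectrum into a Thom spectrum rather than computing a fiberwise dual. Your sentence ``the net effect after pushing forward along $M$\ldots becomes the Thom spectrum\ldots pulled back from $-TM$'' lands on the right target, but the route through $-T^{fib}$ is not how one gets there. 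Once you reorganize the argument this way, the ring-structure comparison is exactly as you describe and is the content of the paper's Proposition~\ref{ringequiv}.
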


 \begin{corollary}\label{gaugestring}  
 $$   hAut(\Sigma^\infty_M (P_+) ) \simeq GL_1(\cs(P)).$$
 
 In particular if $\cpm \to M$ is a universal principal bundle (i.e $\cpm$ is contractible),  then
 $$
 GL_1(LM^{-TM}) \simeq hAut (\cpm).
 $$
 \end{corollary}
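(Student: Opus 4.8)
The plan is to deduce Corollary~\ref{gaugestring} as a purely formal consequence of Proposition~\ref{stringend}, by applying the units functor $GL_1(-)$. Recall that we have \emph{defined} $hAut(\ce) = GL_1(End(\ce))$ for an $E$-line bundle $\ce$; so the assertion $hAut(\fsm(P_+)) \simeq GL_1(\cs(P))$ is precisely the statement that $GL_1(-)$ carries the equivalence of ring spectra
$$End(\fsm(P_+)) \;\simeq\; \cs(P) \;\simeq\; (\pud)^{-TM}$$
supplied by Proposition~\ref{stringend} to an equivalence of group-like monoids. Thus the first step is to record the standard fact that $GL_1(-)$ is a homotopy-invariant functor from ($A_\infty$-) ring spectra to group-like monoids: $GL_1(E)$ is the union of those components of $\Omega^\infty E$ that are invertible in the ring $\pi_0(E)$, with multiplication induced by the ring structure, and a weak equivalence of ring spectra $E \xr{\simeq} E'$ restricts to a weak equivalence $GL_1(E) \xr{\simeq} GL_1(E')$. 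Applying this along the displayed chain of ring-spectrum equivalences gives the first conclusion of the corollary.

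For the ``in particular'' clause I would specialize $P = \cpm$ with total space $\cpm$ contractible and invoke the identifications already recorded in the introduction. There the adjoint bundle of a universal bundle is identified with the free loop space via the path-loop fibration, $\cpm^{Ad} \simeq LM$; consequently $\cs(\cpm) = (\cpm^{Ad})^{-TM} \simeq \ltm$ as ring spectra, where the product on $\ltm$ is the Cohen--Jones realization of the Chas--Sullivan product \cite{cohenjones}. Combining this with the first part, and reading $hAut(\cpm)$ as shorthand for $hAut(\fsm(\cpm_+)) = GL_1(End(\fsm(\cpm_+)))$, one gets $GL_1(\ltm) \simeq hAut(\cpm)$.

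I do not expect any genuine obstacle here: the corollary is formal once Proposition~\ref{stringend} is in hand. The only point deserving care --- and it is bookkeeping rather than mathematics --- is to fix at the outset a framework (for instance, the $\infty$-category of spectra, or the EKMM category of $S$-modules) in which $GL_1$ of an $A_\infty$-ring spectrum is defined as a group-like monoid and is visibly homotopy invariant, and in which the equivalences of Proposition~\ref{stringend} are genuinely multiplicative. Since that proposition is stated exactly as an equivalence of ring spectra, nothing further needs to be verified, and applying $GL_1$ completes the proof.
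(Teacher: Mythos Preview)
Your argument is correct and matches the paper's own proof essentially verbatim: the paper simply observes that, since $hAut(\ce) = GL_1(End(\ce))$ by definition, Corollary~\ref{gaugestring} follows immediately from Proposition~\ref{endo} (the detailed form of Proposition~\ref{stringend}) by applying $GL_1$ to the ring-spectrum equivalence $End(\fsm(P_+)) \simeq \cs(P)$. Your reading of $hAut(\cpm)$ as $hAut(\fsm(\cpm_+))$ and the specialization via $\cpm^{Ad} \simeq LM$ are exactly what is intended.
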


 We use this result in the proof of the first of our main theorems, which identifies the homotopy types of these   groups.
 
 \med
 Recall  from \cite{5author} that if $E$ is a ring spectrum then the  the classifying space   $BGL_1(E)$  classifies
$E$-line bundles.   Given a such an  $E$-line bundle  $E \to \ce \to M$, let $Map_\ce (M, BGL_1(E))$ denote the path component of the mapping space consisting of maps classifying $\ce$.   
 
 \med
 \begin{theorem}\label{main1}
 Let $E \to \ce \to M$ be an $E$-line bundle  over $M$. Then there is an equivalence of group-like $A_\infty$-spaces,
 $$
 hAut (\ce) \xr{\simeq}  \Omega Map_\ce(M, BGL_1(E)).
 $$
 \end{theorem}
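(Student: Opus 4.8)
The plan is to recognize $hAut(\ce)$ as the homotopical gauge group of $\ce$ viewed as a fiber bundle with structure monoid $GL_1(E)$, and then to invoke the classical identification — due to Gottlieb and, in the Yang--Mills context, to Atiyah--Bott — of the classifying space of the gauge group of a bundle classified by $f\colon M\to B\Gamma$ with the path component $Map_f(M,B\Gamma)$. Looping this statement produces the theorem.

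First I would unwind the definitions. By \cite{5author} the $E$-line bundle $\ce\to M$ is classified by a map $f_\ce\colon M\to BGL_1(E)$, unique up to homotopy, with $\ce\simeq f_\ce^{*}\mathcal{U}$ for the universal $E$-line bundle $\mathcal{U}\to BGL_1(E)$; moreover, since the natural map from the units $GL_1(E)$ to the space of $E$-module self-equivalences of the free rank-one module $E$ is an equivalence, the associated-bundle construction gives an equivalence between the groupoid of $E$-line bundles over $M$ and that of principal $GL_1(E)$-bundles over $M$, compatibly with automorphisms. Under this dictionary $\Omega^{\infty}End(\ce)$ is the space of fiberwise $E$-module endomorphisms of $\ce$ over $\mathrm{id}_M$, so $hAut(\ce)=GL_1(End(\ce))$ is exactly the monoid of fiberwise self-equivalences of $\ce$ over $\mathrm{id}_M$; equivalently (taking sections of the fiberwise-conjugation bundle, exactly as for $\pud$ in the introduction) it is the homotopical gauge group $\cg(P_\ce)$ of the principal $GL_1(E)$-bundle $P_\ce$ corresponding to $\ce$. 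This identification is one of group-like $A_\infty$-spaces, since on both sides the product is composition.

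Next I would run the gauge-theory fibration argument. Replace $GL_1(E)$ by a well-pointed topological group $\Gamma$ (the rectification needed is among the results of \cite{5author}), with universal bundle $E\Gamma\to B\Gamma$, and set $P=f_\ce^{*}E\Gamma$. The space $Map^{\Gamma}(P,E\Gamma)$ of $\Gamma$-equivariant maps is the section space of $P\times_{\Gamma}E\Gamma\to M$, whose fiber $E\Gamma$ is contractible, so $Map^{\Gamma}(P,E\Gamma)$ is contractible. Passing to $\Gamma$-quotients gives a map $Map^{\Gamma}(P,E\Gamma)\to Map(M,B\Gamma)$ landing in $Map_{f_\ce}(M,B\Gamma)$, and precomposition exhibits this as a principal $\cg(P)$-fibration: the fiber over $g$ is the set of $\Gamma$-maps $P\to E\Gamma$ covering $g$, i.e. the set of bundle isomorphisms $P\cong g^{*}E\Gamma$, which is a torsor under $\cg(P)$. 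Hence $Map^{\Gamma}(P,E\Gamma)$ is a model for $E\cg(P)$ and $Map_{f_\ce}(M,B\Gamma)\simeq B\cg(P)$; since $\cg(P)$ is group-like, looping yields $\cg(P)\simeq\Omega\, Map_{f_\ce}(M,B\Gamma)$ as group-like $A_\infty$-spaces. As $f_\ce$ classifies $\ce$ we have $Map_{f_\ce}(M,BGL_1(E))=Map_\ce(M,BGL_1(E))$, and combining with the previous paragraph gives the asserted equivalence $hAut(\ce)\xrightarrow{\simeq}\Omega\, Map_\ce(M,BGL_1(E))$. Specializing to $\ce=\Sigma^\infty_M(P_+)$ and applying Proposition~\ref{stringend} and Corollary~\ref{gaugestring} then recovers a description of $GL_1(\cs(P))$.

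The main obstacle is the careful bookkeeping in the first step: one must work within the fiberwise stable-homotopy framework of \cite{5author} so that $GL_1(End(\ce))$ is rigorously identified with a genuine space of fiberwise self-equivalences of $\ce$, and so that the $A_\infty$ (composition) structure there is matched with the monoid structure used in the fibration argument and ultimately with loop concatenation on $\Omega\, Map_\ce(M,BGL_1(E))$. Once $GL_1(E)$ has been rectified to a well-pointed topological group and $\ce$ replaced by an honest $\Gamma$-bundle, the fibration argument and the comparison of multiplicative structures are routine; essentially all of the subtlety lies in performing that translation between the spectral ($End$, $GL_1$) and geometric (sections, gauge transformations) descriptions without losing the higher coherences.
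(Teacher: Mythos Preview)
Your proposal is correct and follows essentially the same route as the paper: both identify $hAut(\ce)$ with the gauge group $\cg(P_\ce)$ of the associated principal $GL_1(E)$-bundle (the paper does this via the equivalence of fiber bundles $P_\ce^{Ad}\simeq \cg L_1(\ce)$ and taking sections, which is exactly your ``sections of the fiberwise-conjugation bundle'' step), and then both invoke the Atiyah--Bott/Gottlieb equivalence $B\cg(P_\ce)\simeq Map_{P_\ce}(M,BGL_1(E))$. You supply a sketch of the latter fibration argument and flag the rectification and $A_\infty$-coherence issues more explicitly than the paper does, but the architecture is the same.
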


 \med
 Let $G \to P \to M$ be a principal bundle.  This bundle is classified by the homotopy class of a map $f_P : M \to BG$.  Let $Map_P(M, BG)$ denote the path component of the mapping space consisting of maps of this homotopy class.
  Now notice that the group $G$ has a natural action on the suspension spectrum, $G _+ \wedge \Sigma^\infty (G_+) \to \Sigma^\infty (G_+)$, which is analogous in the category of spectra to the representation of  group  on its group ring.  This defines a map of group-like monoids, 
 $$
  G \xr{\iota}  GL_1(\Sigma^\infty (G_+)) \quad \text{and of classifying spaces} \quad  BG \xr{B\iota} BGL_1(\Sigma^\infty(G_+)).
 $$
  A special case of this is the group-like monoid   $\Omega M$.  Its suspension spectrum $\Sigma^\infty (\Omega M_+)$ has the structure of an associative ring spectrum.  There is a natural equivalence of $M$ with the classifying space  $M \simeq B(\Omega M)$, thereby inducing the map $\iota : M \to BGL_1(\Sigma^\infty (\Omega M_+)$.  By abuse of notation we write $Map_P(M, BGL_1(\Sigma^\infty(G_+))$ to be the component of the mapping space in the image of $B\iota$. 
 
 We then  have the following corollary.

 \med
 \begin{corollary}\label{cor1}
 One can identify the homotopy type of the group of units of the string topology spectrum as the loop space of the mapping space,
 $$
 GL_1(\cs (P)) \simeq \Omega Map_P(M, BGL_1(\Sigma^\infty (G_+)).
 $$
 In particular,
 $$
 GL_1(LM^{-TM}) \simeq \Omega Map_\iota (M, BGL_1(\Sigma^\infty (\Omega M_+)).
 $$
 \end{corollary}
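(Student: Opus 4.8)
The plan is to deduce Corollary~\ref{cor1} by concatenating the two equivalences already established and then identifying the relevant path components. First I would invoke Corollary~\ref{gaugestring} to write
$$
GL_1(\cs(P)) \simeq hAut\bigl(\Sigma^\infty_M(P_+)\bigr),
$$
regarding $\Sigma^\infty_M(P_+)$ as the $\Sigma^\infty(G_+)$-line bundle over $M$ obtained from $P$ by the fiberwise suspension spectrum construction. Next I would apply Theorem~\ref{main1} with $E = \Sigma^\infty(G_+)$ and $\ce = \Sigma^\infty_M(P_+)$, which supplies an equivalence of group-like $A_\infty$-spaces
$$
hAut\bigl(\Sigma^\infty_M(P_+)\bigr) \xr{\simeq} \Omega\, Map_{\Sigma^\infty_M(P_+)}\bigl(M, BGL_1(\Sigma^\infty(G_+))\bigr),
$$
the subscript denoting the path component of maps $M \to BGL_1(\Sigma^\infty(G_+))$ that classify the line bundle $\Sigma^\infty_M(P_+)$. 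Composing these two equivalences gives
$$
GL_1(\cs(P)) \simeq \Omega\, Map_{\Sigma^\infty_M(P_+)}\bigl(M, BGL_1(\Sigma^\infty(G_+))\bigr).
$$

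It then remains to identify this path component with $Map_P(M, BGL_1(\Sigma^\infty(G_+)))$, that is, with the component containing $B\iota \circ f_P$ for $f_P : M \to BG$ the classifying map of $P$. This is the one step that is not purely formal, and is where I expect the only real work to lie. I would write $P$ as the pullback along $f_P$ of a universal bundle $EG \to BG$; since the fiberwise suspension spectrum construction commutes with base change, $\Sigma^\infty_M(P_+) \cong f_P^*\bigl(\Sigma^\infty_{BG}(EG_+)\bigr)$, so it is enough to see that the $\Sigma^\infty(G_+)$-line bundle $\Sigma^\infty_{BG}(EG_+)$ over $BG$ is classified by $B\iota$. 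For this I would observe that the structure group of $EG$ is $G$ acting on its fiber $G$ by translation, and that applying $\Sigma^\infty(-_+)$ fiberwise turns this action into $G_+ \wedge \Sigma^\infty(G_+) \to \Sigma^\infty(G_+)$, which is exactly the one defining $\iota : G \to GL_1(\Sigma^\infty(G_+))$; passing to classifying spaces then converts the structure map of $EG$ into $B\iota$. Consequently $B\iota \circ f_P$ classifies $\Sigma^\infty_M(P_+)$, so the two path components coincide and the first displayed equivalence of the corollary follows.

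Finally, for the second statement I would take $\cpm \to M$ to be a universal principal bundle, so that the structure group may be taken to be $\Omega M$ and, $\cpm$ being contractible, the classifying map $f_{\cpm} : M \to B(\Omega M) \simeq M$ is a homotopy equivalence. Then $B\iota \circ f_{\cpm}$ is homotopic to the map $\iota : M \simeq B(\Omega M) \to BGL_1(\Sigma^\infty(\Omega M_+))$ of the introduction, so $Map_{\cpm}(M, BGL_1(\Sigma^\infty(\Omega M_+))) = Map_\iota(M, BGL_1(\Sigma^\infty(\Omega M_+)))$ by definition. Since $\cs(\cpm) = (\cpm^{Ad})^{-TM} \simeq LM^{-TM}$ as recalled in the introduction, specializing the first part yields the stated equivalence
$$
GL_1(LM^{-TM}) \simeq \Omega\, Map_\iota\bigl(M, BGL_1(\Sigma^\infty(\Omega M_+))\bigr).
$$
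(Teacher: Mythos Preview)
Your proposal is correct and follows the same route as the paper: combine Corollary~\ref{gaugestring} with Theorem~\ref{main1} applied to $E=\Sigma^\infty(G_+)$ and $\ce=\Sigma^\infty_M(P_+)$, then specialize to the universal bundle. The paper records this as following ``immediately'' from Theorem~\ref{main1}; your additional paragraph verifying that $B\iota\circ f_P$ classifies $\Sigma^\infty_M(P_+)$ makes explicit the identification of path components that the paper handles by its notational convention for $Map_P(M,BGL_1(\Sigma^\infty(G_+)))$ (and which reappears in the proof of Proposition~\ref{iota}).
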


  \med
  We will then use this description of the group of units of the string topology spectrum to perform explicit homology calculations.

 \med
 
 Much of the rest of the paper considers the comparison between the gauge group of a principal bundle, $\cg (P)$, with the group of homotopy automorphisms of its suspension spectrum, $hAut (\Sigma^\infty_M(P_+))$.  There is a natural suspension map that yields a homomorphism of group-like monoids,
\begin{equation}\label{atbott1}
\sigma :  \cg (P) \to \cgsp.
\end{equation}
 
  In \cite{atiyahbott}  Atiyah and Bott gave a beautiful exposition of the topology of the gauge group.  In particular they gave a proof of a result originally observed by
  Gottlieb, saying that the classifying space of the gauge group is a mapping space,
  \begin{equation}\label{abgauge}
  B\cg (P) \simeq Map_P(M, BG).
  \end{equation}
  We observe that Theorem \ref{main1} above is an analogue of this result.  In particular we make the following observation:
  
  \begin{proposition} \label{iota} With respect to the equivalence given in Theorem \ref{main1} and the Atiyah-Bott equivalence (\ref{atbott1}),   the  stabilization map $B\sigma : B\cg (P) \to B \cgsp \simeq BGL_1(\cs (P))$ is  induced  by the map $\iota$ described above: 
  $$
  \iota_*: Map_P(M, BG) \to Map_P(M, BGL_1(\Sigma^\infty (G_+)).
  $$
  \end{proposition}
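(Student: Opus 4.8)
The plan is to recognise both $B\sigma$ and the map $\iota_*$ as the same instance of the naturality, in the structure group, of the Atiyah--Bott--Gottlieb description of a mapping space. So I would begin by recording the moduli-theoretic form of \eqref{abgauge} that is valid for an arbitrary group-like topological monoid $H$: the space $Map(M,BH)$ is the classifying space of the topological groupoid whose objects are principal $H$-bundles over $M$ and whose morphisms are bundle isomorphisms covering $\mathrm{id}_M$, so that $Map(M,BH)\simeq\coprod_{[Q]}B\cg_H(Q)$, the coproduct over isomorphism classes of $H$-bundles $Q\to M$, with $\cg_H(Q)$ the (group-like) gauge monoid of $Q$. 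This description is functorial in $H$: a homomorphism $\psi:H\to H'$ induces extension of structure group $Q\mapsto\psi_*Q:=Q\times_H H'$ and a homomorphism of gauge monoids $\psi_\#:\cg_H(Q)\to\cg_{H'}(\psi_*Q)$, $\phi\mapsto\phi\times_H H'$, which on classifying spaces becomes the restriction of $(B\psi)_*$ to the relevant components. With $H=G$ this is exactly \eqref{abgauge}.

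Next I would identify the ingredients of the Proposition with pieces of this picture for the homomorphism $\iota:G\to GL_1(\Sigma^\infty(G_+))$. (i) The $\Sigma^\infty(G_+)$-line bundle $\Sigma^\infty_M(P_+)$ is, by construction of the fiberwise suspension spectrum, equal to $P\times_G\Sigma^\infty(G_+)$ with $G$ acting through $\iota$, the free rank-one module structure coming from residual right translation; hence it is the line bundle associated to the principal $GL_1(\Sigma^\infty(G_+))$-bundle $\iota_*P$, and is classified by $B\iota\circ f_P$, so the components $Map_{\Sigma^\infty_M(P_+)}(M,BGL_1(\Sigma^\infty(G_+)))$ and $Map_P(M,BGL_1(\Sigma^\infty(G_+)))$ coincide. (ii) Theorem~\ref{main1} applied to $\ce=\Sigma^\infty_M(P_+)$, together with Corollary~\ref{gaugestring}, identifies $B\,hAut(\Sigma^\infty_M(P_+))$ with that same component; moreover, as is built into the proof of Theorem~\ref{main1}, this equivalence is the one obtained by passing from the $E$-line bundle $\ce$ to its associated principal $GL_1(E)$-bundle and applying the Atiyah--Bott--Gottlieb equivalence for the monoid $GL_1(E)$ -- equivalently, by the fact that formation of units commutes suitably with $Sect_M$ and with passage to endomorphism ring spectra, so that $hAut(\ce)=GL_1(End(\ce))\simeq\cg_{GL_1(E)}(\iota_*P)$. (iii) Under the correspondence $\Sigma^\infty_M(P_+)\cong\iota_*P\times_{GL_1(\Sigma^\infty(G_+))}\Sigma^\infty(G_+)$, the stabilization map $\sigma$ of \eqref{atbott1} is precisely $\iota_\#$: by definition $\sigma$ sends a gauge transformation $\phi$ of $P$ to $[p,e]\mapsto[\phi(p),e]$ on $P\times_G\Sigma^\infty(G_+)$, which is exactly the automorphism induced on the associated line bundle by $\phi\times_G GL_1(\Sigma^\infty(G_+))$.

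Combining (i)--(iii): by (iii) the top map $B\sigma$ is $B(\iota_\#)$ once the identifications are made; by (ii) the right-hand vertical is the Atiyah--Bott--Gottlieb equivalence for $GL_1(\Sigma^\infty(G_+))$ applied to $\iota_*P$; and the left-hand vertical is \eqref{abgauge}. Hence the square
$$
\begin{CD}
B\cg (P) @>B\sigma>> B\,hAut(\Sigma^\infty_M(P_+)) \\
@VVV @VVV \\
Map_P(M, BG) @>{\iota_*}>> Map_P(M, BGL_1(\Sigma^\infty (G_+)))
\end{CD}
$$
is obtained by applying the bar construction to the naturality map $\cg_G(P)\xr{\iota_\#}\cg_{GL_1(\Sigma^\infty(G_+))}(\iota_*P)$ of the first paragraph, so it commutes up to homotopy and its bottom arrow is $\iota_*=(B\iota)_*$ -- which is the assertion.

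The step I expect to be the real obstacle is (ii): one must verify that the equivalence furnished by Theorem~\ref{main1} genuinely factors through the associated principal $GL_1(E)$-bundle, i.e. that the formation of units commutes appropriately with the fiberwise sections functor $Sect_M$ and with the passage to endomorphism ring spectra, so that the right-hand vertical of the square is compatible with the naturality used for the bottom arrow. Since this is exactly the mechanism behind Theorem~\ref{main1} itself, the compatibility should be essentially formal once that theorem is in hand; the remaining ingredients -- naturality of the bar construction and of the Borel construction $Q\mapsto Q\times_H(-)$ -- are routine.
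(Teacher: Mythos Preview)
Your proposal is correct and follows essentially the same route as the paper: construct the map of principal bundles $P\to P_{\fsm(P_+)}$ over the homomorphism $\iota:G\to GL_1(\Sigma^\infty(G_+))$, pass to adjoint bundles and take sections to identify $\sigma$ with the induced map of gauge groups, and then invoke naturality of the Atiyah--Bott equivalence. The ``obstacle'' you flag in (ii) is precisely what the paper's proof of Theorem~\ref{main1} already establishes (via the equivalence of fibrations $P_\ce^{Ad}\to\cg L_1(\ce)$ and the resulting identification $hAut(\ce)\simeq\cg(P_\ce)$), so no additional work is needed there.
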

  
  \med
The most fundamental construction in gauge theory  is the action of the gauge group $\cg (P)$ on the space of connections $\ca (P)$.   Since $\ca (P)$ is contractible, the homotopy orbit space, which we denote by $\ca (P) // \cg (P)$,  is a model for the classifying space of the gauge group, $B\cg (P)$.  Our next result is an analogue of this in the setting of $E$-line bundles. 

  For an $E$-line bundle  $E \to \ce \to M$,  we define a category of connections, $Conn (\ce)$, where a connection is a functorially defined $E$-linear transport operator on $\ce$.  This is actually an $\infty$-category, which is an $\infty$- groupoid, where the morphisms are equivalent to the gauge group $\cg (P)$.  This is the relevant homotopy theoretic notion of the gauge group of an $E$-line bundle   ``acting" on the space of connections. We then prove the following.
  
  \begin{theorem}\label{main2}
  The geometric realization of the category of connections on $\ce$  is  homotopy equivalent to the classifying space of the   group of homotopy automorphisms
  $$
 |Conn (\ce)| \simeq  B hAut(\ce).
 $$
 In particular when $\ce = \fsm (P_+)$, the fiberwise suspension spectrum of a principal bundle, we then have that geometric realization of the category of connections
 is the classifying space of the group of units of the string topology spectrum,
 $$
 |Conn (\fsm (P_+))| \simeq BGL_1(\cs (P)).
 $$
 \end{theorem}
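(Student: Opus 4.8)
The plan is to identify $Conn(\ce)$, as an $\infty$-groupoid, with a path component of a mapping space, and then to invoke Theorem~\ref{main1}. By construction an object of $Conn(\ce)$ is an $E$-linear transport operator on $\ce$: a homotopy-coherent functor $T$ from the path $\infty$-groupoid $\Pi(M)$ of $M$ to the $\infty$-groupoid of free rank-one $E$-modules and $E$-module equivalences, whose associated parameterized spectrum over $M$ is equivalent to $\ce$; a morphism of $Conn(\ce)$ is a natural equivalence of such functors, and this is where the (homotopy) gauge group lives. The first step is the rectification (straightening/unstraightening) statement: the $\infty$-groupoid of all $E$-linear transport operators on $M$ is equivalent, via the ``underlying parameterized spectrum'' functor, to the $\infty$-groupoid of $E$-line bundles over $M$, which in turn is $\simeq Map(M, BGL_1(E))$ since $BGL_1(E)$ classifies $E$-line bundles (\cite{5author}). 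Under this chain of equivalences the full sub-$\infty$-groupoid $Conn(\ce)$, spanned by the transport operators with underlying bundle equivalent to $\ce$, is carried to the path component $Map_\ce(M, BGL_1(E))$. Hence $|Conn(\ce)| \simeq Map_\ce(M, BGL_1(E))$.

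Since $Map_\ce(M, BGL_1(E))$ is connected, it is equivalent to $B\,\Omega Map_\ce(M, BGL_1(E))$, and by Theorem~\ref{main1} the based loop space on the right is $hAut(\ce)$, as group-like $A_\infty$-spaces. Therefore
$$
|Conn(\ce)| \simeq Map_\ce(M, BGL_1(E)) \simeq B\,hAut(\ce),
$$
which is the first assertion. Conceptually this is the $E$-linear analogue of the classical identification $\ca(P)//\cg(P) \simeq B\cg(P)$: the role of ``$\ca(P)$ is contractible'' is played by ``the space of $E$-linear transport-operator structures on a fixed $E$-line bundle is contractible'', which is exactly the content of the rectification step, while the role of $\cg(P)$ is played by $hAut(\ce)$, into which the classical gauge group maps through the representation $\sigma$ of (\ref{atbott1}).

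For the special case take $\ce = \fsm(P_+) = \Sigma^\infty_M(P_+)$. By Proposition~\ref{stringend}, $End(\fsm(P_+)) \simeq \cs(P) \simeq (\pud)^{-TM}$ as ring spectra, so $hAut(\fsm(P_+)) = GL_1(End(\fsm(P_+))) \simeq GL_1(\cs(P))$ --- this is Corollary~\ref{gaugestring}. Substituting this into the equivalence just established gives $|Conn(\fsm(P_+))| \simeq BGL_1(\cs(P))$, and in the universal case $\cpm$ this reads $|Conn(\cpm)| \simeq BGL_1(\ltm)$.

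The main obstacle I expect is the first step: making the notion of ``$E$-linear transport operator on $M$'' precise enough that the underlying-parameterized-spectrum functor is an honest equivalence of $\infty$-groupoids --- in particular inducing an equivalence on automorphism spaces, compatibly with deloopings --- and matching the model of $E$-line bundles used in \cite{5author}. This entails some care with the homotopy-coherence bookkeeping for transport (choice of model for $\Pi(M)$, coherence of composition, invariance under homotopy of paths) and with the comparison of two presentations of $Map(M, BGL_1(E))$. Once these are in place, the connectivity of $Map_\ce(M, BGL_1(E))$, the delooping via Theorem~\ref{main1}, and the substitution from Proposition~\ref{stringend} are all routine.
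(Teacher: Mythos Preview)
Your proposal is correct and follows essentially the same route as the paper: identify $Conn(\ce)=Fun_\ce(Sing(M),E\text{-}line)$ with the component $Map_\ce(M,BGL_1(E))$ via the equivalence between $\infty$-groupoids and spaces (the paper phrases this as ``the geometric realization map $|Fun_\ce(Sing(M),E\text{-}line)|\to Map_\ce(|Sing(M)|,|E\text{-}line|)$ is a weak equivalence'', using Proposition~\ref{line} for $|E\text{-}line|\simeq BGL_1(E)$), and then invoke Theorem~\ref{main1}. Your straightening/unstraightening language and the explicit delooping step $Map_\ce\simeq B\Omega Map_\ce$ spell out what the paper leaves implicit, but the argument is the same.
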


 \med
 Now a quick consequence of the Atiyah-Bott result (\ref{atbott1}) is that for a  universal principal bundle, $\Omega M \to \cp(M)\to M$, the gauge group has the homotopy type,
   $$
 \cg (\cp(M)) \simeq \Omega HE(M). 
 $$
 where $HE(M)$ is the group-like monoid of self homotopy equivalences of $M$.  The basepoint of $HE(M)$ is the identity map.  
 Therefore,  If $M$ is acted upon by a group $K$,  then the induced map $K \to  HE(M)$
yields the following.
 
 \med
 \begin{proposition}\label{equivariant}
 Given a group action of $K$ on $M$, there is an induced representation of the loop group on the string topology spectrum,
 $$
 \Omega K \to GL_1(LM^{-TM}).
 $$
 \end{proposition}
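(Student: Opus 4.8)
The plan is to loop the homomorphism $K \to HE(M)$ induced by the action and then to splice it into the chain of equivalences already assembled. First I would make precise that a continuous action $K \times M \to M$ assigns to each $k \in K$ the self-homeomorphism $\phi_k : M \to M$, $m \mapsto k \cdot m$, and that $k \mapsto \phi_k$ is a continuous homomorphism of group-like monoids $K \to HE(M)$ carrying the unit of $K$ to the identity map of $M$, which is the chosen basepoint of $HE(M)$. (No basepoint of $M$ need be fixed, which is precisely why $HE(M)$ is taken to be the monoid of \emph{unbased} self-homotopy equivalences.) Applying the based loop functor at these basepoints --- and using that post-composition with the $\phi_\bullet$ commutes with concatenation of loops, so that the looped map is again a homomorphism for the concatenation product --- gives a homomorphism of group-like $A_\infty$-spaces $\Omega K \to \Omega HE(M)$.

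I would then chain this with the equivalences in hand. The consequence of the Atiyah--Bott--Gottlieb identification recalled just above the statement gives an equivalence of group-like monoids $\cg(\cp(M)) \simeq \Omega HE(M)$ for the universal principal bundle $\Omega M \to \cp(M) \to M$, so composition yields $\Omega K \to \cg(\cp(M))$. Next apply the stabilization homomorphism $\sigma$ of (\ref{atbott1}), namely $\sigma : \cg(\cp(M)) \to hAut(\fsm(\cp(M)_+))$, and finally Corollary \ref{gaugestring} (together with the identification $\cp(M)^{Ad} \simeq LM$ recalled in the introduction, so that $\cs(\cp(M)) \simeq \ltm$), which supplies the equivalence $hAut(\fsm(\cp(M)_+)) \simeq GL_1(\cs(\cp(M))) \simeq GL_1(\ltm)$. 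The composite
$$
\Omega K \longrightarrow \Omega HE(M) \xrightarrow{\ \simeq\ } \cg(\cp(M)) \xrightarrow{\ \sigma\ } hAut(\fsm(\cp(M)_+)) \xrightarrow{\ \simeq\ } GL_1(\ltm)
$$
is the asserted representation of the loop group on the string topology spectrum; one could equally read it off through Corollary \ref{cor1}, landing it in $\Omega Map_\iota(M, BGL_1(\Sigma^\infty(\Omega M_+)))$, but this description is not needed for the statement.

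Since every arrow above is a map of group-like monoids, the construction is essentially formal, and I expect the only point requiring real care to be a basepoint compatibility rather than any substantive homotopy-theoretic difficulty. One must check that the universal $\Omega M$-bundle $\cp(M)$ is classified by the identity map $M \to B\Omega M \simeq M$, so that the component $Map_{\cp(M)}(M, B\Omega M)$ occurring in the Atiyah--Bott equivalence (\ref{abgauge}) is the identity component of $Map(M,M)$; this is what guarantees that the looped map $\Omega K \to \Omega HE(M)$ indeed lands in the part of $\cg(\cp(M))$ on which $\sigma$ and Corollary \ref{gaugestring} have been set up. Once that is in place there is nothing further to do, and in particular nothing here is special to the string topology spectrum beyond the identifications established above.
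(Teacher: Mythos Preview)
Your proposal is correct and follows essentially the same route as the paper: the paper's argument (equation (\ref{rho}) in section 4) is precisely the composite
\[
\Omega K \to \Omega HE(M) \simeq \cg(\cp(M)) \xrightarrow{\ \sigma\ } hAut(\fsm(\cp(M)_+)) \simeq GL_1(\cs(M)) \simeq GL_1(\ltm),
\]
obtained by looping $K \to HE(M)$ and invoking Theorem \ref{loopaut} and Corollary \ref{gaugestring}. Your write-up is, if anything, more careful about basepoints and the identification of the relevant component of $Map(M,M)$ than the paper itself.
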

 
 The induced ring homomorphism in rational homology, $$H_*(\Omega HE(M); \bq) \to H_{*+n}(LM; \bq)$$ is equivalent to the homomorphism introduced and studied by F\'elix and Thomas in \cite{felixthomas}.    
 
 \med
 We end by taking a functorial view of the relationship between the gauge group of a principal bundle and its string topology.   To make this more precise, consider the category of   ``based spaces over $M$",   $Spaces_M$.  This category has objects given by continuous maps $f : X \to M$ that are equipped with sections, $s : M \to X$ such that $f\circ s = id_M$.  Morphisms in this category are maps $g : X \to Y$ that cover the maps to $M$ and preserve the sections.    Now given a principal bundle $G \to P \to M$, consider the contravariant functors,
 
$$
 \cg : Spaces_M  \to Spectra  \quad \text{and} \quad
 \cs : Spaces_M  \to Spectra $$
 
 defined by
 $$
 \cg (X \xr{f} M) = \Sigma^\infty (\cg (f^*(P))_+)
 \quad 
 \text{and} 
 \quad
 \cs  (X \xr{f} M)  = \cs (f^*(P)).
 $$
   Our goal is to prove the following result about these functors.
 
 \med
 \begin{theorem}\label{main3}  Given a  principal bundle $G \to P \to M$, the map of ring spectra  
 $ 
\sigma : \sgp \to  \cs (P) $ generalizes to give a natural transformation  between the gauge group functor $\cg : Spaces_M \to Spectra$ and the string topology functor, $\cs : Spaces_M \to Spectra$.   Furthermore,  this natural transformation satisfies the following properties.
\begin{itemize}
\item $\cs$ is ``linear", in the sense that it takes homotopy cocartesian squares to homotopy cocartesian squares.   If one applies homotopy groups, the induced functor $\pi_* (\cs (-)) : Spaces_M \to Graded \, Abelian  \, Groups$, is  a generalized cohomology theory defined on the category $Spaces_M$.
\item $\sigma : \cg \to \cs$ is the universal linear approximation to $\cg$, in the sense that if $\tau : \cg \to \cl$ is a natural transformation to any linear
contravariant homotopy functor, $\cl : Spaces_M \to Spectra$,  then there is a unique natural transformation $\bar \tau : \cs \to \cl$ making the following diagram commute:
   $$
   \begin{CD}
   \cg @>\tau >>  \cl \\ 
   @V\sigma VV     @VV=V \\
   \cs  @>>\bar \tau >  \cl
   \end{CD}
   $$
   \end{itemize}
   \end{theorem}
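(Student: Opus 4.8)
The plan is to construct the natural transformation $\sigma$ first, then verify linearity of $\cs$, and finally establish the universal property via a Goodwillie-style calculus argument adapted to the category $Spaces_M$. To construct $\sigma : \cg \to \cs$, I would note that for each object $f : X \to M$ in $Spaces_M$ the pullback $f^*(P)$ is a principal $G$-bundle over $X$, and the stabilization map of (\ref{atbott1}) applied to $f^*(P)$ over $X$ gives a ring spectrum map $\Sigma^\infty(\cg(f^*(P))_+) \to (f^*(P)^{Ad})^{-TX}$. The target must be reinterpreted: $f^*(P)^{Ad} = f^*(P^{Ad})$ as a space over $X$, and the Thom spectrum construction with $-TX$ must be massaged — using that $X$ carries the section $s$ and the relevant virtual bundle is pulled back from $M$ — so that it agrees with $\cs(f^*(P)) = (f^*(P^{Ad}))^{-TM}$ in the sense of Proposition \ref{stringend}; here $End(\Sigma^\infty_X(f^*(P)_+))$ computed fiberwise over $M$ via $f$ is the cleanest model, and functoriality of $End(-)$ in the base space over $M$ gives $\sigma$ as a natural transformation automatically. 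The naturality in $X$ is then essentially formal once everything is phrased in terms of the functor $X \mapsto \Sigma^\infty_M(\text{(pushforward of }f^*(P)_+))$ and its endomorphism ring spectrum.

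Next, linearity of $\cs$: a homotopy cocartesian square in $Spaces_M$ is a pushout $X = X_1 \cup_{X_0} X_2$ over $M$ (with compatible sections). Pulling back $P$ and forming adjoint bundles commutes with such pushouts, so $f^*(P^{Ad})$ over $X$ is the corresponding pushout of the $X_i^*(P^{Ad})$ along $X_0^*(P^{Ad})$. The Thom spectrum functor $(-)^{-TM}$ — being a fiberwise smash with a parameterized sphere spectrum followed by a colimit — preserves homotopy pushouts, giving that $\cs$ sends homotopy cocartesian squares to homotopy cocartesian squares of spectra. Equivalently, $\cs(-)$ is a homology theory on $Spaces_M$: it is excisive and, since a point over $M$ (i.e. $s : M \hookrightarrow X$ being an equivalence) pulls $P$ back to $P$ itself over $M$, it is suitably reduced. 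I would record that the induced $\pi_*(\cs(-))$ then satisfies the Eilenberg–Steenrod-type axioms on $Spaces_M$, hence is a generalized cohomology theory on that category.

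The universal property is the substantive point, and I expect it to be the main obstacle. The statement is that $\sigma$ exhibits $\cs$ as the \emph{linearization} (first excisive approximation, $P_1$ in Goodwillie's tower) of the functor $\cg : X \mapsto \Sigma^\infty(\cg(f^*(P))_+)$. The strategy is: (i) show $\cg$ itself is reduced and preserves filtered homotopy colimits, so its Goodwillie linearization $P_1\cg$ exists and is computed by the usual stabilization formula $P_1\cg(X) = \hocolim_k \Omega^k \cg(X \smsh_M S^k_M)$ (suspension and loops taken fiberwise over $M$, respecting sections); (ii) identify this stabilization with $\cs(X)$ — this is where the fiberwise suspension spectrum $\Sigma^\infty_M(P_+)$ and Proposition \ref{stringend} do the work, since stabilizing the monoid of sections of a fiberwise group against fiberwise spheres over $M$ reproduces exactly the fiberwise suspension spectrum model for $\cs$; (iii) check that the natural transformation $\sigma$ agrees with the canonical map $\cg \to P_1\cg$ under this identification, which one verifies on the cell $M$ itself and then extends by the universal property of $P_1$. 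Given (i)–(iii), the universal property in the theorem is the defining property of $P_1$: any natural transformation $\tau : \cg \to \cl$ with $\cl$ linear factors uniquely through $\cg \to P_1\cg \simeq \cs$. The delicate part throughout is handling the \emph{parameterized} setting — making sure ``fiberwise over $M$, preserving sections'' is the right ambient $\infty$-category for Goodwillie calculus (it is: $Spaces_M$ with the section-preserving maps is pointed, and $Spectra$-valued excisive functors on it form the target) — and checking that the homotopy-colimit formula for $P_1$ is valid there, which follows from the fact that fiberwise suspension over a fixed base commutes with the relevant colimits.
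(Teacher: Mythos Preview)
Your approach diverges substantially from the paper's, and the divergence exposes a real gap.

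\textbf{On linearity.} You argue via Thom spectra and colimits, but for a general object $f:X\to M$ of $Spaces_M$ the space $X$ need not be a manifold, so neither $-TX$ nor a Pontrjagin--Thom model makes sense. The paper sidesteps this entirely: it uses the sections model $\cs(f:X\to M)=\Gamma_X\bigl(f^*(\Sigma^\infty_M(P^{Ad}_+))\bigr)$ and then invokes the fiberwise Brown representability theorem of Cohen--Klein (Theorem~A.4 of \cite{umkehr}), which says a contravariant homotopy functor on $Spaces_M$ is linear \emph{if and only if} it is represented by a parameterized spectrum over $M$. Since $\cs$ is visibly represented by $\Sigma^\infty_M(P^{Ad}_+)$, linearity is immediate. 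Your colimit argument can perhaps be repaired by switching to the sections model, but as written it does not apply.

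\textbf{On universality.} You propose to identify $\cs$ with the Goodwillie linearization $P_1\cg$ and then cite the defining universal property of $P_1$. But the identification $P_1\cg\simeq\cs$ \emph{is} the theorem: it is logically equivalent to the universal property you are trying to prove. Your step~(ii), ``stabilizing the monoid of sections of a fiberwise group against fiberwise spheres over $M$ reproduces exactly the fiberwise suspension spectrum model for $\cs$'', is the entire content, and you only assert it. Unwinding the formula $\hocolim_k\,\Omega^k\,\Sigma^\infty\bigl(\Gamma_{\Sigma^k_M X}(f_k^*P^{Ad})_+\bigr)$ and matching it with $\Gamma_X\bigl(f^*(\Sigma^\infty_M P^{Ad}_+)\bigr)$ is not formal; it requires commuting $\Sigma^\infty(-_+)$ past a limit-type construction and then identifying the result.

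The paper's argument is instead a direct Yoneda-style construction. Given $\tau:\cg\to\cl$ with $\cl$ linear, Brown representability produces a parameterized spectrum $\ce\to M$ with $\cl(g:Y\to M)\simeq\Gamma_Y(g^*\ce)$. One then evaluates $\tau$ at the object $p:P^{Ad}\to M$ of $Spaces_M$, obtaining a map of section spaces; restricting to the unstable sections and applying it to the identity $id\in Map_M(P^{Ad},P^{Ad})$ yields a single map $P^{Ad}\to\Omega^\infty_M\ce$ over $M$. Its adjoint $\Sigma^\infty_M(P^{Ad}_+)\to\ce$ is a map of parameterized spectra and hence, again by Brown representability, a natural transformation $\bar\tau:\cs\to\cl$. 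This is much shorter and avoids the stabilization computation altogether. (The Goodwillie-tower picture you gesture at is genuine---the paper itself points to Malkiewich's work on exactly this---but it is not the route taken here, and making it rigorous would require substantially more than your sketch provides.)
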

   
   The fact that the string topology functor is the ``linearization" of the gauge group functor, suggests the existence of a ``Goodwillie tower" \cite{goodwillie}
   interpolating  between these two functors.  This idea is being investigated by C. Malkiewich \cite{cary}.

   \med
   This paper is organized as follows.   In section one we collect various  properties of parameterized spectra, including a Poincar\'e-Atiyah duality theorem, and we prove Proposition \ref{stringend}  and Corollary \ref{gaugestring} as stated above.   In section 2 we prove Theorem \ref{main1}, Corollary \ref{cor1},  and Proposition \ref{iota}.  We then   apply them to perform explicit calculations of the homology of the group of units of   string topology spectra.    In section 3 we describe the category of connections on an $E$-line bundle over a manifold,    and prove Theorem \ref{main2}.      In section 4 we prove   Proposition \ref{equivariant}.  As an example we   then calculate the induced map in homology of the representation of the loop group $\Omega SU(2)$ on the string topology spectrum of $S^4$,  $(LS^4)^{-TS^4}$.  Finally in section 5 we describe the functorial approach to the relationship between the gauge group and the string topology spectrum, and prove Theorem \ref{main3}. 
   
   \med
   The authors are grateful to Nitu Kitchloo, John Klein, and Jack Morava for helpful conversations about this work. 
 
 \section{Parameterized spectra,  Poincar\'e duality, and the loop space}
 
 In this section we recall some basic constructions and duality results from the theory of parameterized spectra.    The most comprehensive reference for parameterized spectra is the book by May and Sigurdsen \cite{maysigurd}.
 
 Let $X$ be a topological space of the homotopy type of a CW complex.  The theory of parameterized spectra involves constructions that take place in the category $Spaces_X$ of based (retractive) spaces over $X$.    
  In this category there are standard constructions such as fiber product, $Y \times_X Z$,  fiberwise (reduced) suspension $\Sigma_XY$, and fiberwise smash product, $Y \wedge_X Z$.  In particular, the fiberwise smash product is defined to be the 
 pushout of the diagram
$$
X \leftarrow Y\cup_X  Z \to  Y \times_X Z  \, .
$$

\begin{definition} \label{fibered}
A {\it parameterized spectrum} 
$ {\cal E}$ over $X$
consists of  objects ${\cal E}_j \in {\cal R}_X$ for $j \in {\Bbb N}$ together
with (structure) maps
$$
\Sigma_X {\cal E}_j \to {\cal E}_{j+1} \,  ,
$$
for each $j \ge 0$.
A {\it morphism} ${\cal E}\to {\cal E}'$ is given by   
maps ${\cal E}_j \to {\cal E}'_j$ which are compatible with the
structure maps. 
\end{definition}

  ${\cal E}$ is said to be {\it fibrant} if the adjoints to the structure
maps, $\ce_j \to \Omega_X \ce_{j+1}$  are weak homotopy equivalences.
By usual techniques, any parameterized  spectrum ${\cal E}$ can be converted into a fibrant
one ${\cal E}^f$ in which 
$$
{\cal E}^f_j\,\, := \,\, \underset{n}{\text{hocolim\, }} 
\Omega^n_X {\cal E}_{j+n} \, ,
$$
where  
$\Omega^n_X $ denotes the $n$-fold  fiberwise based loop space.   $\ce^f$ is   called  the {\it fibrant 
replacement} of $\ce$.

A morphism ${\cal E}\to {\cal E}'$ is a 
{\it weak equivalence} if the associated morphism of 
fibrant replacements  ${\cal E}^f \to ({\cal E}')^f$ 
is a {\it level-wise} weak equivalence: 
for each $j$, the map ${\cal E}_j^f \to ({\cal E}')_j^f$ 
is required to be weak equivalence in ${\cal R}_X$. 

\med

Probably the most important example of a parameterized spectrum for our purposes, is that of the  \sl fiberwise suspension spectrum. 
\rm 
 For  $Y$ an object in  $Spaces_X$ ,  we denote by $\Sigma^\infty_X Y$
the parameterized  spectrum over $X$ given by the collection $\Sigma^j_X Y$ of
iterated fiberwise suspensions of $Y$.     

\med
If $f : Z \to X$ is any map, then one constructs an object $Z_+$ in $Spaces_X$ by  adding a ``disjoint fiberwise basepoint".  That is,
$$
Z_+ = Z \sqcup X
$$
and has structure maps $f \sqcup id : Z \sqcup X \to X$, and $s : X \to Z \sqcup X$ is the inclusion of the righthand summand.  

\med
The primary way parameterized spectra arise in our study is in the context of fibrations over manifolds.  So suppose
$$
F \to E \xr{p} M
$$
is a fibration over a connected, closed manifold $M$.  Then the fiberwise suspension spectrum
$$
\Sigma^\infty_M(E_+) \xr {p} M
$$
is a parameterized spectrum, whose fiber spectrum is the (ordinary) suspension spectrum $\Sigma^\infty (F_+).$

\med
The following is a construction that is important in the statement of Poincar\'e duality in the context of parameterized spectra.
Given a parameterized spectrum $\ce \to X$,  and a $k$-dimensional vector bundle $\xi \to X$, we define a new parameterized spectrum
$\ce_\xi \to X$ in the following way.  Let $S^k \to S(\xi) \to X$ be the associated sphere bundle, defined by taking the fiberwise one-point compactification. 

\begin{definition}\label{twist}
Define the parameterized spectrum  $\ce_\xi $ to be
$$\ce_\xi  = S(\xi) \wedge_X \ce \to X.
$$
\end{definition}

 Notice this is a parameterized spectrum whose fiber is given by the smash product $S^k \wedge \cf \cong \Sigma^k\cf$.  
 We remark that by standard techniques,   one can define this construction for virtual bundles $\xi$ as well. See  \cite{maysigurd} for details, but note that the notation   in  this reference  is different than that used here.
 
 \med
 Now as is well known,  a parameterized spectrum $\ce \to X$, determines (reduced) homology and cohomology theories defined on the category $Spaces_X$.
\begin{definition} \label{homology}  Given an object $Y$ of $Spaces_X$,  the  homology spectrum is defined to be the (ordinary) spectrum  
 \begin{equation}
\ce(Y) =  \ce \wedge_X Y/Y.
 \end{equation}
The homology groups are  defined to be the homotopy groups of $\ce(Y)$.
 \end{definition}
 
 This notation needs some explanation.  In this definition, $\ce \wedge_X Y$ is the fiberwise smash product of $Y$ with the fibrant replacement $\ce^f$.  Furthermore $\ce \wedge_X Y/Y$ means the mapping cone of the resulting ``fiberwise basepoint",  $\sigma : Y \to \ce \wedge_X Y$.

 \med
 The following is an easy exercise.
 
 \begin{lemma}\label{thom}  Suppose  $\ce$ is the parameterized spectrum over $X$ given by the fiberwise suspension spectrum of a fibration, $F \to E \xr{p} X$.  That is,
  $\ce = \Sigma^\infty_X(E_+).$  Let $\xi \to X$ be a virtual vector bundle.  Then   the twisted homology spectrum $\ce_\xi (X)$ is equivalent to the 
 Thom spectrum of the pullback bundle $p^* \xi \to E$, which we denote by $E^\xi$.  That is, there is an equivalence,
 $$
 \ce_\xi (X) \simeq E^\xi.
 $$
 \end{lemma}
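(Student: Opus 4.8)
The plan is to unwind both sides of the claimed equivalence to a single pushforward to a point and then recognise the Thom spectrum. First I would observe that $\ce_\xi(X)$ denotes the homology spectrum of Definition~\ref{homology} evaluated on the tautological object $X_+ = S^0_X$ of $Spaces_X$; since $X_+$ is the unit for $\wedge_X$, we have $\ce_\xi \wedge_X X_+ = \ce_\xi$, and the ``fiberwise basepoint'' $\sigma$ is just the section $s\colon X \to \ce_\xi$. Hence $\ce_\xi(X) = \ce_\xi/s(X)$: the ordinary spectrum obtained by collapsing the section, which (after the usual fibrant replacement) is the derived left pushforward $\pi_!\,\ce_\xi$ along $\pi\colon X \to \ast$. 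On the other side, by Definition~\ref{twist}, $\ce_\xi = S(\xi)\wedge_X \ce = S(\xi)\wedge_X \Sigma^\infty_X(E_+)$, and $\Sigma^\infty_X(E_+) = p_!(\bs_E)$, where $\bs_E$ is the fiberwise sphere spectrum over $E$ (the unit for $\wedge_E$): indeed $p_!$ carries the fiberwise-basepointed $E$ over $E$ to $E_+$ over $X$ and commutes with fiberwise suspension.

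The key step is the projection (base-change) formula for parameterized spectra, $p_!(p^*\mathcal{Y}\wedge_E \mathcal{W}) \simeq \mathcal{Y}\wedge_X p_!\mathcal{W}$ of \cite{maysigurd}, applied with $\mathcal{Y} = S(\xi)$ and $\mathcal{W} = \bs_E$. Since $p^*S(\xi) = S(p^*\xi)$ is the (fiberwise one-point-compactified) sphere bundle of the pulled-back bundle over $E$, this gives
$$
\ce_\xi \;=\; S(\xi)\wedge_X p_!(\bs_E) \;\simeq\; p_!\bigl( S(p^*\xi)\wedge_E \bs_E \bigr) \;=\; p_!\bigl( \Sigma^\infty_E(S(p^*\xi)) \bigr).
$$
Applying $\pi_!$ and using $\pi_! p_! = (\pi\circ p)_! = q_!$ for $q\colon E \to \ast$, we get $\ce_\xi(X) \simeq q_!\bigl(\Sigma^\infty_E(S(p^*\xi))\bigr)$, which is $\Sigma^\infty$ of $S(p^*\xi)$ with its section-at-infinity collapsed, i.e. the suspension spectrum of the Thom space of $p^*\xi \to E$. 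That is precisely $E^{p^*\xi}$, which under the convention $E^\xi := E^{p^*\xi}$ is the asserted $E^\xi$.

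Since the paper (rightly) calls this an exercise, there is no deep obstacle; the one place needing care is homotopy-invariance. Definition~\ref{homology} uses the fibrant replacement $\ce^f$, and the projection formula together with the identification of ``collapse the section'' with $\pi_!$ hold only in the derived sense, after the appropriate cofibrant/fibrant approximations supplied by the May--Sigurdson machinery of \cite{maysigurd}; none of this interferes with the constructions above, but it should be invoked rather than glossed over. The remaining point is the passage to virtual $\xi$: writing $\xi = \eta \ominus \underline{\br}^N$ one has $\ce_\xi \simeq \Sigma^{-N}\ce_\eta$ and $E^\xi \simeq \Sigma^{-N}E^\eta$ by construction, so the honest-bundle case yields the virtual case by naturality and stability. (Equivalently, one may reduce to trivial $\xi = \underline{\br}^k$ --- where $\ce_{\underline{\br}^k} = \Sigma^k_X\ce$ and the Thom space of $\underline{\br}^k$ over $E$ is $\Sigma^k(E_+)$, making the claim immediate --- and then bootstrap by gluing over a finite open cover of $X$ trivialising $\xi$, using that both sides send homotopy pushouts in the $X$-variable to homotopy pushouts.)
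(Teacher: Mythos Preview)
Your argument is correct. The paper does not actually prove Lemma~\ref{thom}; it is introduced with ``The following is an easy exercise'' and no proof is given, so there is nothing to compare your approach against. Your route via the projection formula $p_!(p^*\mathcal{Y}\wedge_E\mathcal{W})\simeq\mathcal{Y}\wedge_X p_!\mathcal{W}$ from \cite{maysigurd}, together with $\Sigma^\infty_X(E_+)=p_!\bs_E$ and $\pi_!p_!=q_!$, is the natural way to make the exercise precise, and your handling of the virtual case by desuspension is fine. If anything, the exercise can be done slightly more bare-handedly: one checks directly on retractive spaces that $S(\xi)\wedge_X E_+$ is canonically identified with $S(p^*\xi)\cup_E X$ as an object of $Spaces_X$ (fiberwise this is $S^k\wedge (E_x)_+\cong\bigvee_{E_x}S^k$), and then collapsing the section $X$ yields the Thom space of $p^*\xi$. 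But that computation is exactly the unstable instance of the projection formula you invoke, so the difference is only one of packaging.
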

 
 There is a similar definition of the associated twisted cohomology.
 \begin{definition}\label{cohomology}
 Given a fibrant parameterized spectrum $\ce$ and an object $Y$ of $Spaces_X$,  the   cohomology spectrum $\ce^\bullet (Y)$ is defined to be the  (ordinary) spectrum of sections along $Y$,  $
 \Gamma_Y(r^*\ce)$.    
 This is the spectrum whose $j$-th space
is the mapping space $Map_{Spaces_X}(Y, {\cal E}_j)$.
 The structure maps for ${\cal E}$ yield structure
maps on these mapping spaces.   The cohomology groups are defined to be   the homotopy groups of  $\ce^\bullet (Y)$.
 \end{definition}

 \med
 Here is the general  Poincar\'e duality theorem  that we will need. We are stating it on the spectrum level (rather than on the level of homotopy groups), and in some ways it can be viewed as a twisted version of Atiyah duality \cite{atiyahdual}.     This  result was stated in \cite{umkehr}, but has its origins in \cite[thms.\ A,D]{Klein_dualizing}, \cite[\S5,8]{Klein_dualizing_2}, 
\cite[th.\ 4.9]{pohu}, \cite[prop.\ 2.4]{WW1}).  A complete proof is given in \cite{maysigurd}.

 \med
 
 \begin{theorem}\label{poincare}  Let $M$ be a closed $n$-dimensional manifold, and $\ce \to M$ a fibrant parameterized spectrum over $M$.  There there is an equivalence between the following twisted cohomology and homology spectra,
  $$ \Gamma_M(\ce) \simeq \ce_{-TM}(M)$$. 
\end{theorem}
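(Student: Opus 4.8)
The plan is to recognise the assertion as fiberwise Atiyah--Poincar\'e duality and to deduce it from the classical (unparameterised) Atiyah duality theorem. Write $r \colon M \to \ast$ for the collapse map. Unwinding Definitions \ref{twist}, \ref{homology} and \ref{cohomology}, the statement becomes
$$ \Gamma_M(\ce) = r_*\ce \simeq r_!(S^{-TM}_M \wedge_M \ce) = \ce_{-TM}(M), $$
naturally in $\ce$, where $r_*$ is the derived spectrum of sections (right adjoint to $r^*$), $r_!$ is the ``total spectrum'', obtained by collapsing the fiberwise basepoint section (left adjoint to $r^*$), and $S^{-TM}_M = (\fsm(M_+))_{-TM}$ is the parameterised Thom spectrum of the virtual bundle $-TM$ over $M$ in the sense of Definition \ref{twist}. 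Throughout I would fix a finite CW structure on $M$ and keep $\ce$ fibrant, so that $\Gamma_M$ is homotopy invariant.

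\medskip
\textbf{Step 1: the base case $\ce = S^0_M := \fsm(M_+)$.} Here $r_*(S^0_M) = r_*r^*\bs = F(\Sigma^\infty(M_+), \bs) = D(\Sigma^\infty(M_+))$ is the Spanier--Whitehead dual of $M_+$, while $r_!(S^{-TM}_M \wedge_M S^0_M) = r_!(S^{-TM}_M) = M^{-TM}$ by Lemma \ref{thom} applied to $p = \mathrm{id}_M$ and $\xi = -TM$. So in this case the theorem is precisely Atiyah duality $D(\Sigma^\infty(M_+)) \simeq M^{-TM}$, which I would recall in the usual way: choose an embedding $M \hk \br^N$ with normal bundle $\nu$ (so $\nu \oplus TM$ is trivial, i.e.\ $\nu$ represents $-TM$ stably), pick a tubular neighbourhood $U \cong E(\nu)$, and combine the Pontryagin--Thom collapse $c \colon S^N \to \br^N/(\br^N \setminus U) = M^\nu$ with the Atiyah duality pairing $M^\nu \wedge (M_+) \to S^N$; these exhibit $M_+$ and $\Sigma^{-N} M^\nu = M^{-TM}$ as Spanier--Whitehead duals, the triangle identities being verified stably as usual.

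\medskip
\textbf{Step 2: bootstrapping to an arbitrary $\ce$.} Both $c$ and the Atiyah pairing have evident fiberwise refinements over $M$ (collapse onto the tube, diagonal along the tube), which assemble into Costenoble--Waner duality data exhibiting $S^{-TM}_M$ as the dual of $S^0_M$ over the point in the bicategory of parameterised spectra. Such data produces, for every parameterised spectrum $\ce$, a natural comparison map $\alpha_\ce \colon r_!(S^{-TM}_M \wedge_M \ce) \to r_*\ce$ --- concretely, cap product with the fiberwise fundamental class $[M] \colon \bs \to M^{-TM}$ coming from $c$. To see $\alpha_\ce$ is an equivalence I would argue by cellular induction on a cofibrant model of $\ce$: it holds for $\ce = S^0_M$ by Step 1, hence --- using the projection formula for $r_!$ and Step 1 applied over the image of a cell --- for the free parameterised cells $D^k \times S^m$ lying over cells of $M$; and both functors $r_!(S^{-TM}_M \wedge_M -)$ and $r_*(-) = \Gamma_M(-)$ carry the cellular homotopy-colimit presentation of $\ce$ to a homotopy colimit. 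For $r_!$ this is immediate; for $\Gamma_M$ it holds because $M$ is a finite complex, so sections commute with the finite and filtered homotopy colimits occurring in the cellular filtration. Hence $\alpha_\ce$ is an equivalence for all $\ce$.

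\medskip
The main obstacle is Step 2: one must actually set up enough of the parameterised-spectra formalism --- fiberwise smash, $r_!$ and $r_*$, the projection formula, and the precise sense in which $\Gamma_M$ commutes with homotopy colimits over a finite base --- to make the formal duality argument and the cellular induction rigorous, while being careful that the input is only Step 1 and not a twisted form of the conclusion (over a closed manifold $r_*$ does eventually agree with a twisted $r_!$, so circularity must be avoided). For this reason, in practice I would present Step 1 in detail and, for Step 2, invoke the Costenoble--Waner duality machinery of May--Sigurdsson \cite{maysigurd}, as the authors indicate.
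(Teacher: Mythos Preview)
The paper does not actually prove this theorem: immediately before the statement it is described as a twisted Atiyah duality result, attributed to \cite{umkehr}, \cite{Klein_dualizing}, \cite{Klein_dualizing_2}, \cite{pohu}, \cite{WW1}, with the remark that a complete proof is given in May--Sigurdsson \cite{maysigurd}; no argument is supplied beyond these citations. So there is nothing to compare your proposal against in the paper itself.

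That said, your sketch is a reasonable outline of the standard proof and is in the spirit of the cited sources. Reducing to classical Atiyah duality for $\ce = S^0_M$ and then promoting this to arbitrary $\ce$ via Costenoble--Waner duality and a cellular/colimit argument is exactly the strategy in \cite{maysigurd}. Your caveats are well placed: the real content is in Step~2, and the honest thing to do (as you yourself conclude) is to invoke the May--Sigurdsson machinery rather than redevelop the projection formula, the behaviour of $\Gamma_M$ under homotopy colimits over a finite base, and the bicategorical duality framework from scratch. One small point: in your cellular induction you should be inducting over a CW presentation of $M$ (pulling $\ce$ back over cells, where it trivialises), not over a cofibrant model of $\ce$ itself; the latter is not in general built from finitely many free cells, and it is the finiteness of $M$ that makes $r_*$ commute with the relevant colimits.
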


\noindent \bf Note.  \rm  In this theorem $M$ need not be orientable.  Orientability is required when applying the Thom isomorphism in (co)homology.
\med
\begin{corollary}\label{fiber}  Let $M$ be a closed manifold, and $F \to E \to M$ a fiber bundle.  Then there is an equivalence of spectra,
$$
\Gamma_M(\fsm (E_+)) \simeq E^{-TM}.
$$
\end{corollary}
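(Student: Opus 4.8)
The plan is to obtain Corollary \ref{fiber} as the direct specialization of the Poincar\'e--Atiyah duality theorem (Theorem \ref{poincare}) to the case of a fiberwise suspension spectrum, combined with the Thom spectrum identification of Lemma \ref{thom}.

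First I would set $\ce = \fsm(E_+) = \Sigma^\infty_M(E_+)$, which by the discussion following Definition \ref{fibered} is a parameterized spectrum over $M$ with fiber spectrum $\Sigma^\infty(F_+)$. Since Theorem \ref{poincare} is stated for \emph{fibrant} parameterized spectra, I would pass to the fibrant replacement $\ce^f$; because $\ce \to \ce^f$ is a weak equivalence and both the section spectrum functor $\Gamma_M(-)$ and the twisting operation $(-)_{-TM}$ of Definition \ref{twist} preserve weak equivalences, this loses no information. Theorem \ref{poincare} then gives
$$
\Gamma_M(\fsm(E_+)) \simeq \fsm(E_+)_{-TM}(M).
$$

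Next I would apply Lemma \ref{thom} with $X = M$, with the fibration taken to be $F \to E \xr{p} M$ (a fiber bundle is in particular a fibration), and with the virtual bundle $\xi = -TM \to M$. The lemma identifies the twisted homology spectrum $\fsm(E_+)_{-TM}(M)$ with the Thom spectrum of the pullback virtual bundle $p^*(-TM) \to E$, which is precisely $E^{-TM}$ by definition. Composing the two equivalences yields
$$
\Gamma_M(\fsm(E_+)) \simeq E^{-TM},
$$
as claimed.

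I do not expect a genuine obstacle here: the substance has already been absorbed into Theorem \ref{poincare} and Lemma \ref{thom}. The only points requiring a word of care are that the fibrant replacement inserted to state Poincar\'e duality is harmless (as above), and that the virtual-bundle extension of Definition \ref{twist}, imported from \cite{maysigurd}, is the same in both the statement of Theorem \ref{poincare} and that of Lemma \ref{thom}, so that the two identifications of $\fsm(E_+)_{-TM}(M)$ are compatible. One might also remark, echoing the note after Theorem \ref{poincare}, that no orientability hypothesis on $M$ is needed for this equivalence of spectra; orientability would enter only if one wished to reindex $E^{-TM}$ via a Thom isomorphism in (co)homology.
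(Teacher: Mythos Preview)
Your proposal is correct and follows exactly the same route as the paper's own proof, which simply invokes Theorem \ref{poincare} together with Lemma \ref{thom}. The extra remarks about fibrant replacement and compatibility of the virtual-bundle twisting are sound but unnecessary elaborations on what the paper treats as an immediate specialization.
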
.

\begin{proof}  This follows from the Poincar\'e duality theorem above and Lemma \ref{thom}. \end{proof}  
 
The following  example of this lemma is important in string topology.

\med
\begin{corollary}\label{loop}  Let $M$ be a closed manifold, and let $e : LM \to M$ be the map that evaluates a loop at the basepoint of the circle.
Then there is an equivalence,
$$
\Gamma_M(\fsm (LM_+)) \simeq LM^{-TM}.
$$
\end{corollary}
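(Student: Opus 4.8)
The plan is to recognize this as the special case of Corollary \ref{fiber} applied to the evaluation fibration, so the real work reduces to checking that $e : LM \to M$ is a fibration and then assembling the two inputs---Lemma \ref{thom} and the Poincar\'e--Atiyah duality Theorem \ref{poincare}---exactly as in the proof of Corollary \ref{fiber}.

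First I would observe that the evaluation map $e : LM \to M$, $e(\gamma) = \gamma(1)$, is a Hurewicz fibration with fiber the based loop space $\Omega M$: indeed $LM = Map(S^1, M)$ and $e$ is restriction along the cofibration $\{*\} \hookrightarrow S^1$, and restriction along a cofibration of the source is always a fibration of mapping spaces. Since $M$ is a closed manifold, $LM$ has the homotopy type of a CW complex, so $\fsm(LM_+)$ is a parameterized spectrum over $M$ of the kind to which the constructions of Section 1 apply, and its fiber spectrum is $\Sigma^\infty(\Omega M_+)$.

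Next I would apply Lemma \ref{thom} to the fibration $\Omega M \to LM \xr{e} M$ with the virtual bundle $\xi = -TM$: this identifies the twisted homology spectrum $\fsm(LM_+)_{-TM}(M)$ with the Thom spectrum $LM^{-TM}$ of the pullback $e^*(-TM) \to LM$. Then I would apply Theorem \ref{poincare} to the fibrant replacement of $\ce = \fsm(LM_+)$, obtaining $\Gamma_M(\fsm(LM_+)) \simeq \fsm(LM_+)_{-TM}(M)$. Composing the two equivalences yields $\Gamma_M(\fsm(LM_+)) \simeq LM^{-TM}$, as claimed; equivalently, one may simply invoke Corollary \ref{fiber} directly once the fibration property of $e$ has been noted.

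The only point requiring care is that Corollary \ref{fiber} is phrased for fiber bundles, whereas $e : LM \to M$ is a fibration but not manifestly locally trivial; however, the proof of Corollary \ref{fiber} uses only the fibration property, through Lemma \ref{thom} (stated for fibrations) and Theorem \ref{poincare} (valid for any fibrant parameterized spectrum). The remaining bookkeeping---passing to fibrant replacements before invoking the duality statement---is routine and is handled as in Section 1, so I do not expect any genuine obstacle here.
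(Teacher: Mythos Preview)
Your proposal is correct and matches the paper's approach: the paper presents this corollary simply as the special case of Corollary~\ref{fiber} applied to the evaluation fibration $\Omega M \to LM \xr{e} M$, without further argument. Your additional remarks verifying that $e$ is a fibration and that the proof of Corollary~\ref{fiber} only requires a fibration (not a locally trivial bundle) are appropriate due diligence that the paper leaves implicit.
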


\med

We now describe how this  result generalizes to the setting of principal bundles over a manifold.    
Let 
 $ 
 P 
 $ 
 be a principal $G$-bundle over $M$.   Indeed, $p: P \to M$ need not be  strictly speaking a principal bundle, but only needs to be a Serre fibration that is fiber homotopy equivalent to a principal bundle.  An example of such a fibration is the following.
 
Let $x_0 \in M$ be a fixed basepoint.  Let $$\cp (M) = \{ \gamma : [0,1] \to M \, \text{such that} \, p(0) = x_0 \}$$  Then evaluating a path at $t=1$ defines the ``path-loop" fibration
$$
\Omega M \to \cp (M) \xr{e} M.
$$
By a construction of Kan,  the based loop space $\Omega M$ is homotopy equivalent to the geometric realization
of a simplicial group, which we call $G_M$.  Notice that the classifying space $BG_M$ is homotopy equivalent to $M$, and indeed the universal principal bundle 
$$
G_M \to EG_M \xr{p} BG_M
$$
is fiber-homotopy equivalent to the path-loop fibration $\Omega M \to \cp (M) \xr{e} M.$

Now given a principal bundle $G \to P \xr{p} B$, let $G \to \pud \to B$ be the corresponding adjoint bundle.  That is,
\begin{equation}\label{pad}
P^{Ad} = P \times_G G^{Ad}
\end{equation}
where $G^{Ad}$ is the topological group $G$  given the action of $G$ by conjugation.   In the case of a universal principal bundle, $G \to EG \to BG$, we have the following well known result.

\begin{proposition} The fibration $G \to EG^{Ad} \to BG$   is fiber-homotopy equivalent to the fibration $G \to L(BG) \xr{e} BG.$
\end{proposition}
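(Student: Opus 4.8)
The plan is to exhibit an explicit fiberwise homotopy equivalence between $EG^{Ad} = EG \times_G G^{Ad}$ and the free loop space $L(BG) = Map(S^1, BG)$, compatible with the projections to $BG$. First I would recall the standard model for $L(BG)$ when $G$ is a topological group: a free loop in $BG$ based near a point $b$ is, up to homotopy, described by a path in $EG$ whose endpoints differ by the $G$-action, i.e.\ the data of a point $e \in EG$ together with an element $g \in G$ such that the loop is the image in $BG$ of the path from $e$ to $eg$. This is precisely the description of $EG^{Ad}$: a point of $EG \times_G G$ is an equivalence class $[e, g]$ with $[eh, h^{-1}gh] = [e,g]$, and the assignment $[e,g] \mapsto (\text{loop in } BG \text{ traced by } e \text{ and } eg)$ is well defined because changing $e$ to $eh$ reparametrizes the loop while conjugating $g$. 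I would make this precise by passing to a functorial concatenation-of-paths model for both sides, or by using the bar construction $BG = |N_\bullet G|$, where a loop is a compatible family of simplices and the holonomy around it recovers a conjugacy-class-worth of group elements.

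The key steps, in order, would be: (1) fix a point-set model — I would use $EG = |N_\bullet(G \rtimes *)|$ or the Milnor/geometric realization model, so that $L(BG)$ has a combinatorial description; (2) define the map $\Phi : EG^{Ad} \to L(BG)$ by sending $[e,g]$ to the loop obtained by following the canonical path in $EG$ from $e$ to $eg$ and projecting to $BG$, and check it is well defined and continuous and covers the identity on $BG$ (the projection $EG^{Ad} \to BG$ sends $[e,g]$ to $p(e)$, which is the basepoint of the loop $\Phi([e,g])$); (3) check that on fibers $\Phi$ restricts to a map $G^{Ad} \to \Omega(BG)$ that is the standard homotopy equivalence $G \simeq \Omega BG$ — this is the usual statement that the connecting map of the fibration $G \to EG \to BG$ is an equivalence, and conjugation on $G$ corresponds to the action of $\pi_1$, which is trivial on the based-loop-space level up to homotopy since $\Omega BG \to \Omega BG$ induced by an inner automorphism is homotopic to the identity; (4) invoke the fiberwise Whitehead theorem (a map of fibrations over a path-connected base that is a weak equivalence on one fiber is a fiber-homotopy equivalence) to upgrade the fiberwise equivalence to a fiber-homotopy equivalence over $BG$.

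The main obstacle I anticipate is step (3): showing that the fiberwise map is genuinely a homotopy equivalence and that the conjugation action of $G$ on itself is "absorbed" correctly, rather than introducing a nontrivial twist. Concretely, one must verify that the composite $G^{Ad} \xrightarrow{\Phi} \Omega BG$, together with the action of $\pi_0$ or $\pi_1$ of the base, is compatible — and the key point is the classical fact that for a connected $H$-space (here $\Omega BG$), the action of its fundamental group by conjugation/path-translation on the based loop space is homotopically trivial, so the adjoint (conjugation-twisted) bundle and the untwisted loop fibration agree. I would cite this as well-known (it is implicit in Gottlieb's work and in the literature the authors already reference), keeping the argument at the level of producing the map $\Phi$ and appealing to the fiberwise Whitehead theorem for the conclusion. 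If one prefers a cleaner route, one can instead note that both fibrations are classified by maps $BG \to B\,hAut(G)$ (self-homotopy-equivalences of $G \simeq \Omega BG$), and that both classifying maps factor through the same component, namely the one corresponding to the action of $G$ on itself; then the equivalence is immediate. Either way, no lengthy computation is required — the content is the identification of the two models and a standard fiberwise rigidity argument.
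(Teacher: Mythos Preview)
The paper does not actually prove this proposition; it is stated as ``well known'' and left without argument. Your proposal therefore supplies substantially more than the paper does, and the outline you give is a correct and standard one: produce a fibrewise map $EG^{Ad} \to L(BG)$ (or its inverse, via holonomy/path-lifting), identify the induced map on fibres with the canonical equivalence $G \simeq \Omega BG$, and conclude by the fibrewise Whitehead theorem. Your alternative suggestion of comparing classifying maps into $B\,hAut(G)$ is also valid. One small point worth tightening in step (2): to make the map $[e,g] \mapsto (\text{path from } e \text{ to } eg)$ continuous you need a \emph{continuous} choice of path in $EG$, which you can obtain from a contraction of $EG$ or, more cleanly, by going in the other direction and using the path-lifting property of the fibration $EG \to BG$ to define $L(BG) \to EG^{Ad}$ via holonomy. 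Either way the argument goes through without difficulty.
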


Notice that in particular, $EG_M^{Ad} \simeq LM$.  
We therefore have the following generalization of Corollary \ref{loop}.

\begin{theorem}\label{prinstring}  Let $G \to P \to M$ be a principal bundle over a manifold $M$.  Then there is an equivalence,
$$
\Gamma_M(\fsm (\pud_+)) \simeq (\pud)^{-TM}.
$$
\end{theorem}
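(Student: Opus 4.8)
The plan is to obtain this as a direct instance of Corollary \ref{fiber} applied to the adjoint bundle, so that the only genuinely new point is recognizing $\pud \to M$ as (a fibration fiber homotopy equivalent to) a fiber bundle with fiber $G$. Concretely, once we know $\pud \to M$ is such a fibration, Theorem \ref{poincare} gives an equivalence $\Gamma_M(\fsm(\pud_+)) \simeq \fsm(\pud_+)_{-TM}(M)$, and Lemma \ref{thom} identifies the right-hand side — the $(-TM)$-twisted homology spectrum of the fiberwise suspension spectrum of $\pud \to M$ — with the Thom spectrum of the pullback of $-TM$ along $\pud \to M$, i.e. with $(\pud)^{-TM}$. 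Composing the two equivalences gives the statement.

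In more detail, first I would recall that for an honest principal $G$-bundle $p : P \to M$ the associated adjoint bundle $\pud = P\times_G G^{Ad}$ of (\ref{pad}) is a fiber bundle over $M$ with fiber $G$, so in particular a Serre fibration, and it carries the fiberwise conjugation group structure described in the Introduction. Since in our setting $p : P \to M$ is only assumed to be a Serre fibration fiber homotopy equivalent to a principal bundle, I would pass to an honest principal bundle $P'$ in the same fiber homotopy type (for instance the one built from Kan's simplicial group model, as in the construction of $G_M$ above) and set $\pud := (P')^{Ad}$; this changes nothing up to fiber homotopy equivalence over $M$, and the fiberwise suspension spectrum $\fsm((-)_+)$, its fibrant replacement, the section spectrum $\Gamma_M(-)$, the twisted homology spectrum $(-)_{-TM}(M)$, and the Thom spectrum $(-)^{-TM}$ are all invariant under such equivalences. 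With $\pud \to M$ now a fiber bundle, Corollary \ref{fiber} (equivalently Theorem \ref{poincare} followed by Lemma \ref{thom}) applies verbatim with $E = \pud$ and yields $\Gamma_M(\fsm(\pud_+)) \simeq (\pud)^{-TM}$.

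The place I would be most careful — and essentially the only thing to check beyond quoting Corollary \ref{fiber} — is the bookkeeping of fibrant replacements: the homology spectrum of Definition \ref{homology} and the section spectrum of Definition \ref{cohomology} are formed from $\ce^f$, so to invoke Theorem \ref{poincare} one must first fibrantly replace $\fsm(\pud_+)$ and verify this does not disturb the Thom spectrum identification of Lemma \ref{thom}; since every construction in play preserves weak equivalences of parameterized spectra, this is routine. As a consistency check, specializing $P$ to the path-loop fibration $\cp(M) \to M$, for which $\pud \simeq LM$ by the preceding Proposition, recovers Corollary \ref{loop}, as it should.
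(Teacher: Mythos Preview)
Your proposal is correct and is exactly the argument the paper has in mind: the theorem is stated immediately after Corollary~\ref{fiber} and the discussion of $\pud$, and is meant to follow by applying Corollary~\ref{fiber} (equivalently Theorem~\ref{poincare} together with Lemma~\ref{thom}) to the fiber bundle $G \to \pud \to M$. Your additional care about fibrant replacements and about $P$ being only fiber-homotopy equivalent to a principal bundle is appropriate given the generality announced just before (\ref{pad}), but does not alter the route.
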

 This theorem  helps to elucidate the results of \cite{gruhersalvatore} in the following way.
Recall that the adjoint bundle of a principal bundle $P^{Ad} \to M$ is a  fiberwise group.    The multiplication gives the fiberwise suspension spectrum $\fsm (P_+) $  the structure of a fiberwise ring spectrum.  Then the   cohomology spectrum $\Gamma_M(\fsm (\pud))$  is an ordinary ring spectrum.  Just as ordinary Poincar\'e duality sends the cup product in cohomology to the intersection product in homology (up to sign),  the Poincar\'e -Atiyah duality Theorem \ref{poincare}   sends the ring structure in the twisted cohomology spectrum to an intersection product defined via a Pontrjagin-Thom construction, in the twisted homology spectrum.  Applied to this case we get the following.

\begin{proposition}\label{ringequiv} The Poincar\'e- Atiyah equivalence given in Theorem \ref{prinstring}
$$\Gamma_M(\fsm (\pud_+)) \simeq (\pud)^{-TM}$$ preserves the ring multiplications up to homotopy.
\end{proposition}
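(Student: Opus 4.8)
The plan is to present both ring multiplications as the composite of the same three elementary operations and then to check that the Poincar\'e--Atiyah equivalence of Theorem~\ref{poincare} carries one composite to the other, operation by operation.

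First I would write down the two products explicitly. Set $\ce = \fsm(\pud_+)$. The fiberwise group law $\mu : \pud \times_M \pud \to \pud$ makes $\ce$ a fiberwise ring spectrum with multiplication $\ce \wedge_M \ce \to \ce$, and the ring structure on the section spectrum is the composite
\[
\Gamma_M(\ce) \wedge \Gamma_M(\ce) \xrightarrow{\ \mathrm{ext}\ } \Gamma_{M\times M}(\ce \boxtimes \ce) \xrightarrow{\ \Delta^{*}\ } \Gamma_M\big(\Delta^{*}(\ce\boxtimes\ce)\big) \;=\; \Gamma_M(\ce \wedge_M \ce) \xrightarrow{\ \mu_{*}\ } \Gamma_M(\ce),
\]
where $\ce \boxtimes \ce \to M\times M$ is the external fiberwise smash product, $\mathrm{ext}$ is the external product of section spectra, $\Delta : M \to M\times M$ is the diagonal, $\Delta^{*}$ is restriction of sections along $\Delta$, and one uses the identification $\Delta^{*}(\ce\boxtimes\ce)\cong \ce\wedge_M\ce$. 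On the other side, by Lemma~\ref{thom} one has $\ce_{-TM}(M)\simeq (\pud)^{-TM}$ and $(\ce\boxtimes\ce)_{-T(M\times M)}(M\times M)\simeq (\pud\times\pud)^{-T(M\times M)}$, and the Gruher--Salvatore product is the composite
\[
(\pud)^{-TM}\wedge(\pud)^{-TM} \;\cong\; (\pud\times\pud)^{-T(M\times M)} \xrightarrow{\ \Delta_{!}\ } (\pud\times_M\pud)^{-TM} \xrightarrow{\ \mu_{*}\ } (\pud)^{-TM},
\]
where $\Delta_{!}$ is the Pontryagin--Thom collapse associated to the embedding $\Delta$ (whose normal bundle is $TM$, which is exactly what converts the dimension shift $-T(M\times M)$ on the source into $-TM$ on the target), and $\mu_{*}$ is the induced map of Thom spectra. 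A preliminary step is to check that the product of \cite{gruhersalvatore} really does admit this Pontryagin--Thom description; this is essentially an unwinding of their construction of the ring structure on the Thom spectrum of a fiberwise monoid, and in the universal case it recovers the Chas--Sullivan product on $LM^{-TM}$ as in \cite{cohenjones}.

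Next I would match the two composites under the duality equivalence $D_\ce : \Gamma_M(\ce) \xrightarrow{\ \simeq\ } \ce_{-TM}(M)$ of Theorem~\ref{poincare}, using three compatibilities. \emph{(i) Monoidality}: the duality equivalences for $M$, for $M$, and for $M\times M$ fit into a commuting square identifying $\mathrm{ext}$ with the external smash isomorphism of Thom spectra; this follows from naturality of the duality together with the fact that the dualizing datum of $M\times M$ is the external product of those of the factors. \emph{(ii) Naturality in the spectrum}: $D$ is natural in the parameterized spectrum, so it carries $\mu_{*}$ on section spectra to $\mu_{*}$ on Thom spectra. \emph{(iii) Base change along $\Delta$}: under $D$ the restriction of sections $\Delta^{*}$ corresponds to the umkehr map $\Delta_{!}$ on twisted homology, with the twist by the normal bundle of $\Delta$ accounting for the change of shift. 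Granting (i)--(iii), the two displays correspond under $D$, so $D$ is a map of ring spectra, which is the assertion.

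The hard part will be (iii), together with making it fit cleanly alongside (i). One must realize the collapse map $\Delta_{!}$ and the Poincar\'e--Atiyah equivalence inside a single framework --- the base-change and duality calculus for parameterized spectra of May and Sigurdsson \cite{maysigurd}, compare also \cite{umkehr, Klein_dualizing, Klein_dualizing_2} --- and verify that the tubular-neighbourhood data defining $\Delta_{!}$ is precisely what the duality isomorphism for $M\times M$ produces when one restricts a section to the diagonal. This is the parameterized, spectrum-level form of the classical fact that Poincar\'e duality intertwines restriction to a submanifold with the Gysin pushforward. Once this compatibility is established, the displayed diagram chase (together with the easy observation that the identity section of $\pud$ maps to the unit) finishes the proof.
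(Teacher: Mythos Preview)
The paper does not actually give a proof of this proposition. The entire argument is the sentence immediately preceding the statement: ``Just as ordinary Poincar\'e duality sends the cup product in cohomology to the intersection product in homology (up to sign), the Poincar\'e--Atiyah duality Theorem~\ref{poincare} sends the ring structure in the twisted cohomology spectrum to an intersection product defined via a Pontrjagin--Thom construction, in the twisted homology spectrum.'' The proposition is then stated as the specialization of this principle.

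Your proposal is a correct and careful unpacking of exactly that sentence. The three-step decomposition you write down --- external product, restriction along the diagonal, fiberwise multiplication --- is the standard description of the cup product on a section spectrum, and the Gruher--Salvatore product is indeed the corresponding composite with $\Delta^{*}$ replaced by the Pontrjagin--Thom umkehr $\Delta_{!}$. Your compatibility (ii) is trivial naturality; (i) is the multiplicativity of Atiyah duality under products; and (iii) is precisely the parameterized, spectrum-level form of the classical statement the paper is quoting. So you are not taking a different route: you are supplying the details that the paper leaves implicit. The references you cite (\cite{maysigurd}, \cite{umkehr}, \cite{Klein_dualizing}, \cite{Klein_dualizing_2}) are the right places to extract (i) and (iii) in the generality needed; in particular the ``Wirthm\"uller-type'' base-change isomorphisms in \cite{maysigurd} are what make (iii) go through cleanly.
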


Motivated by this we  redefine the string topology spectrum $\cs (P)$ to be $\Gamma_M(\fsm (\pud_+))$ with its ring structure. 
 In the particular case of $G_M \to EG_M \to M$,  we write $\cs (EG_M)$ as $\cs (M)$, which we call the string topology spectrum of  $M$.  In this case we have
$$
\cs (M) \simeq LM^{-TM}.
$$

\med
We now point out that the string topology spectrum can be interpreted as an ``endomorphism spectrum" in the following way.   Let $E$ be a ring spectrum and
$E \to \ce \to M$ an $E$-line bundle over $M$.  Consider the parameterized spectrum $End_M (\ce) \to M$ whose fiber at $x \in M$ is the spectrum of $E$-module endomorphisms of the fiber at $x$,  $End^E(\ce_x)$.  Let $End (\ce) = \Gamma_M(End_M (\ce))$ be the spectrum of sections.  The following is now a straightforward exercise. It implies Proposition \ref{stringend} as stated in the introduction.  

\begin{proposition}\label{endo}  Let $G \to P \to M$ be a principal bundle. There is an equivalence of fiberwise ring spectra,  
$$
End_M(\fsm (P_+)) \simeq \fsm (\pud_+).
$$
By applying sections, we have an equivalence of ring spectra,
$$
End (\fsm (P_+)) \simeq \Gamma_M(\fsm (\pud_+)) = \cs (M).
$$
In particular
$$
End(\fsm ((EG_M)_+)) \simeq \cs (M) \simeq LM^{-TM}.
$$
\end{proposition}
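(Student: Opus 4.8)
The plan is to establish the fiberwise equivalence $End_M(\fsm(P_+)) \simeq \fsm(\pud_+)$ of parameterized ring spectra over $M$, and then to read off the statement about section spectra by applying $\Gamma_M(-)$, which is lax monoidal and carries weak equivalences of fibrant parameterized spectra to weak equivalences.

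First I would reduce to a fiberwise computation. Write $E = \Sigma^\infty(G_+)$ for the group-ring spectrum of $G$. Over a point $x\in M$ the fiber of $\fsm(P_+)$ is $\Sigma^\infty((P_x)_+)$, where $P_x$ is a free, transitive right $G$-set; choosing a point $p\in P_x$, the map $g\mapsto p\cdot g$ identifies $\Sigma^\infty((P_x)_+)$ with $E$ as a right $E$-module, free of rank one. Hence the fiber of $End_M(\fsm(P_+))$ over $x$ is $End^E(\Sigma^\infty((P_x)_+)) \simeq End^E(E)$, and there is a natural equivalence $End^E(E)\simeq E$ sending an endomorphism $\phi$ to $\phi(p)$, with inverse $a\mapsto L_a$ (left multiplication). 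Under this identification composition of endomorphisms corresponds to multiplication in $E$, since $L_a\circ L_b = L_{ab}$. (Working instead with $E$ as a left module over itself gives $E^{op}$, but the antipode $g\mapsto g^{-1}$ of the group supplies $E^{op}\simeq E$, so the outcome is unchanged.)

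Next I would track the dependence on the auxiliary choice $p\in P_x$. Replacing $p$ by $p\cdot h$, $h\in G$, changes the trivialization of the fiber by the left translation $L_{h^{-1}}$, and conjugating $L_a$ by this change of coordinates yields $L_{h^{-1}ah}$. Therefore the bundle $x\mapsto End^E(\Sigma^\infty((P_x)_+))$ is precisely the bundle of ring spectra associated to $P$ through the conjugation action of $G$ on $E$, namely $P\times_G \Sigma^\infty(G^{Ad}_+) = \fsm(\pud_+)$, and the fiberwise multiplications are matched: composition of endomorphisms on the one side, the fiberwise group multiplication of $\pud$ on the other. Put conceptually, this is the statement that $End_M$ commutes with the associated-bundle construction, together with the observation that for the group-ring spectrum $E$ one has $End^E(E)\simeq E$ with the residual $G$-action — inherited from the left-translation action of $G$ on $E$ by $E$-module automorphisms, which acts on endomorphisms by conjugation — being exactly the defining action of $G^{Ad}$. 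This is the heart of the proof.

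Finally, applying $\Gamma_M(-)$ to the fiberwise equivalence gives an equivalence of (ordinary) ring spectra
$$
End(\fsm(P_+)) = \Gamma_M\!\left(End_M(\fsm(P_+))\right) \simeq \Gamma_M(\fsm(\pud_+)) = \cs(P),
$$
the last identification being the (re)definition of $\cs(P)$, with the ring structure supplied by Theorem \ref{prinstring} and Proposition \ref{ringequiv}. Specializing to $P = EG_M$ and using the equivalence $EG_M^{Ad}\simeq LM$ recorded above together with Corollary \ref{loop} yields $End(\fsm((EG_M)_+)) \simeq \cs(M) \simeq \ltm$. The step I expect to require the most care is upgrading the "choose a point in each fiber" argument to an honest morphism of parameterized ring spectra, rather than a mere fiberwise equivalence: this means working in a suitable model (or $\infty$-)category of parameterized $E$-module spectra over $M$, checking the cofibrancy/fibrancy hypotheses that make $End_M$ homotopically meaningful, and verifying the homotopy-coherence of the comparison of monoid structures (fiberwise composition versus the $\pud$-multiplication). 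Everything else is formal.
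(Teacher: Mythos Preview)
Your proposal is correct and is precisely the argument the paper has in mind: the paper offers no proof beyond declaring the proposition ``a straightforward exercise,'' and your fiberwise computation---identifying each fiber of $\fsm(P_+)$ as a free rank-one $\Sigma^\infty(G_+)$-module, invoking $End^E(E)\simeq E$, and tracking the conjugation action under change of trivialization to recover $\pud$---is exactly that exercise written out. Your caveat about upgrading the pointwise argument to an honest map of parameterized ring spectra is well taken and is the only place any real care is needed; the rest, including the passage to sections and the specialization to $EG_M$, is formal as you say.
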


\med
Given any ring spectrum $E$ and $E$-line bundle $E \to \ce \to M$,  we observe that the group-like monoid of homotopy automorphisms of $\ce$ are the units of the endomorphism ring, $$ hAut (\ce) = GL_1(End (\ce)).$$   

 Notice that  Corollary \ref{gaugestring} as stated in the introduction now follows immediately from Proposition \ref{endo}.

 
 \section{Gauge groups and the units of string topology}

 In this section we will prove Theorem \ref{main1},  Corollary \ref{cor1}, and Proposition \ref{iota} as stated in the introduction, and then apply them to do a specific calculation.
 
  \begin{theorem}\label{loopaut}  Let $G_M \to EG_M \to BG_M$ be a universal principal bundle  with $EG_M$ contractible.  Let $\Omega M \to LM \xr{e} M$ be the evaluation fibration defined by $e(\gamma) = \gamma (0).$   Then the homotopy type of the gauge group $\cg (EG_M)$ can be described in the following two ways:
\begin{align}
\cg (EG_M) &\simeq \Gamma_M(LM) \notag \\
&\simeq \Omega HE(M) \notag
\end{align} where  $HE(M)$ is the group-like monoid of self-homotopy equivalences of $M$.  
\end{theorem}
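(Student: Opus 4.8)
The plan is to prove the two displayed equivalences in turn. The first is essentially formal. Recall from the introduction that for any principal bundle $Q \to M$ the gauge group is naturally isomorphic, as a topological group, to the section group of the adjoint bundle, $\cg(Q) \cong \Gamma_M(Q^{Ad})$, the multiplication coming from the fiberwise group structure on $Q^{Ad}$. I would apply this with $Q = EG_M$. By the Proposition recorded above, the adjoint bundle of a universal bundle $G \to EG \to BG$ is fiber-homotopy equivalent to the free loop fibration $L(BG) \xrightarrow{e} BG$; specializing to $G_M$ and using $BG_M \simeq M$ gives the fiber-homotopy equivalence $EG_M^{Ad} \simeq LM$ over $M$ noted there. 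Since the section functor $\Gamma_M(-)$ takes fiber-homotopy equivalences of fibrations over $M$ to homotopy equivalences, this yields $\cg(EG_M) \simeq \Gamma_M(LM)$; one checks along the way that the equivalence respects multiplications, the fiberwise loop concatenation on $LM$ matching the fiberwise group structure on $EG_M^{Ad}$.

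For the second equivalence I would argue directly by adjunction. A section $s$ of $e : LM \to M$ assigns to each $x \in M$ a loop $s(x) : S^1 \to M$ with $s(x)(0) = x$; equivalently it is a map $\hat s : S^1 \times M \to M$ with $\hat s(0,-) = \mathrm{id}_M$; equivalently it is a loop in $Map(M,M)$ based at $\mathrm{id}_M$. This correspondence is a homeomorphism $\Gamma_M(LM) \cong \Omega_{\mathrm{id}} Map(M,M)$ carrying the fiberwise loop product to ordinary loop concatenation. Since any self-map of $M$ homotopic to $\mathrm{id}_M$ is automatically a homotopy equivalence, the path component of $Map(M,M)$ containing the identity coincides with the identity component of $HE(M)$; as we are looping at that basepoint, $\Omega_{\mathrm{id}} Map(M,M) = \Omega HE(M)$. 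Composing the two chains gives the theorem. This is consistent with the Atiyah--Bott--Gottlieb formula $B\cg(Q) \simeq Map_Q(M, BG)$: for $Q = EG_M$ the classifying map of the universal bundle is the identity of $BG_M \simeq M$, so $B\cg(EG_M) \simeq Map_{\mathrm{id}}(M,M) = HE_{\mathrm{id}}(M)$, and looping recovers $\Omega HE(M)$.

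There is no deep difficulty here; the work is bookkeeping. The point requiring the most care is homotopy-invariance of the section functor under fiber-homotopy equivalence, so that the \emph{merely} fiber-homotopy equivalence $EG_M^{Ad} \simeq LM$ may legitimately be used, and, relatedly, checking that every map in sight is compatible with the relevant group or monoid structures, since these spaces are invoked later as group-like $A_\infty$-spaces. The one genuinely structural input, identifying the adjoint bundle of the universal bundle with the free loop fibration, has already been established above, so the rest is assembly.
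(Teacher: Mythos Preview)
Your proof is correct and, for the first equivalence $\cg(EG_M)\simeq\Gamma_M(LM)$, follows exactly the paper's route: identify the gauge group with sections of the adjoint bundle and then invoke the fiber-homotopy equivalence $EG_M^{Ad}\simeq LM$ over $M$.

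For the second equivalence the two arguments diverge slightly. The paper appeals to the Atiyah--Bott/Gottlieb description $B\cg(P)\simeq Map_P(M,BG)$, specializes to the universal bundle (where the classifying map is the identity of $BG_M\simeq M$), obtains $B\cg(EG_M)\simeq Map_{id}(M,M)$, and then loops. You instead chain the first equivalence with the direct exponential-law identification $\Gamma_M(LM)\cong \Omega_{id}Map(M,M)$, noting only at the end that this matches Atiyah--Bott. Your route is more self-contained and avoids the classifying-space detour, while the paper's approach has the virtue of plugging directly into the Atiyah--Bott framework that is used throughout (e.g.\ in the proofs of Theorem~\ref{main1} and Proposition~\ref{iota}). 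Both arrive at $\Omega Map_{id}(M,M)=\Omega HE(M)$ by the same observation about the identity component.
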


\begin{proof}    Given any group $G$, the adjoint bundle of the universal bundle, $G \to EG^{Ad} = EG \times_G G \to BG$ is well known to be fiber homotopy equivalent to the evaluation fibration of the loop space, $\Omega BG \to LBG \xr{e} BG$.  The first statement in the theorem then follows. The fact that $\cg (EG_M) \simeq \Omega HE(M)$ follows from Corollary \ref{loop}, by noticing that $BG_M \simeq M$, 
and then observing that the identity map $id : M \to M \simeq BG_M$ classifies the universal bundle $G_M \to EG_M \to BG_M \simeq M$.  The corollary then implies that $\cg (EG_M) \simeq \Omega Map_{id}(M,M)$.  However the component of the space of self maps of $M$ homotopic to the identity consists of homotopy equivalences.  Furthermore  this is the component of the basepoint ($= id$) in  $HE(M)$.  Thus 
$$
 \cg (EG_M) \simeq \Omega Map_{id}(M,M) = \Omega HE(M).
 $$
 \end{proof}
 
 \med
 We now prove  Theorem \ref{main1}.
 \begin{proof}  
 Let $E$ be a ring spectrum.  Given an $E$-line bundle, $E \to \ce \to M$, recall that  $hAut (\ce) = GL_1(End (\ce))$.    Furthermore, in the discussion prior to the statement of  Proposition \ref{endo},  the endomorphism spectrum $End (\ce)$ is defined to be the section spectrum $\Gamma_M(End_M \ce)$, where $End_M(\ce) \to M$ is the parameterized spectrum whose fiber at $x \in M$ is the spectrum of $E$-module endomorphisms of the fiber $\ce_x$.   Now consider the corresponding fiber bundle of infinite loop spaces 
given by taking the zero spaces of the fibrant model of $End_M(\ce)$:
$$
\Omega^\infty E \to    \Omega^\infty_MEnd_M(\ce) \to M.
$$
  By restricting to path components of these $E$-module endomorphisms that are equivalences, we get a subbundle, which we will call $ \cg L_1(\ce)$:
$$
GL_1(E) \to  \cg L_1(\ce)  \to M.
$$
 Furthermore, $hAut (\ce) = \Gamma_M(\cg L_1(\ce))$.  Notice, however, that this bundle has another description.  The homotopy class of map $f_\ce : M \to BGL_1(E)$ that classifies the $E$-line bundle $\ce$,  also classifies a principal $GL_1(E)$-bundle over $M$:  
 $$
 GL_1(E) \to P_\ce \to M.
 $$
 (Here we are replacing the group-like topological monoid $GL_1(E)$ by a topological group.)  The relationship between $P_\ce$ and the parameterized spectrum $\ce$ is that $$\Omega^\infty_M \ce = (P_\ce) \times_{GL_1(E)} \Omega^\infty E.$$   (See \cite{units} and \cite{maysigurd} for details.)    
 
 Now consider the corresponding adjoint bundle, $GL_1(E) \to P_\ce^{Ad} \to M$.  As pointed out before, this bundle can be viewed as  being the bundle of $GL_1(E)$-equivariant automorphisms of $P_\ce$ covering the identity map on $M$.  From this viewpoint it becomes clear that there is a map of fibrations   $ P_\ce^{Ad} \to \cg L_1(\ce)$ over $M$ which is an equivalence on the fibers.     Therefore their spaces of sections are equivalent:
 $$
 hAut (\ce) = \Gamma_M(\cg L_1(\ce)) \simeq \Gamma_M( P_\ce^{Ad}) = \cg (P_\ce).
 $$
 Theorem  \ref{main1} now follows from the Atiyah Bott equivalence (\ref{abgauge}) applied to the principal bundle $GL_1(E) \to P_\ce \to M.$
 \end{proof}
 
 \med
 We note that Corollary \ref{cor1} as stated in the introduction now follows immediately.   We now prove Proposition \ref{iota}.

\begin{proof}
Suppose $G \to P \to M$ is a principal bundle.  Consider the homomorphism $\iota : G \to GL_1(\Sigma^\infty (G_+))$ described earlier.  This extends to a map
 of principal bundles,
\begin{equation}\label{iotaprin}
 \begin{CD}
 G    @>>>  P  @>>>   M \\
 @V\iota VV   @V\iota VV  @VV=V \\
 GL_1(\Sigma^\infty (G_+))   @>>> P_{\fsm (P_+)}  @>>> M
 \end{CD}
 \end{equation} where $ P_{\fsm (P_+)}$ is the principal $GL_1(\Sigma^\infty (G_+)) $-bundle associated to the $\Sigma^\infty (G_+)$-line bundle
 $\fsm (P_+) \to M$ as described above.    We therefore get an induced map of adjoint bundles

 \begin{equation}\label{adjoint}
 \begin{CD}
 G    @>>>  P^{Ad}  @>>>   M \\
 @V\iota VV   @V\iota VV  @VV=V \\
 GL_1(\Sigma^\infty (G_+))   @>>> P_{\fsm (P_+)}^{Ad}  @>>> M
 \end{CD}
 \end{equation}
 By taking sections we get the  map of gauge groups,  $\sigma : \cg (P) \xr{\iota} \cg (P_{\fsm (P_+)}) \simeq hAut (\fsm (P_+)).$
 Since the Atiyah-Bott proof of (\ref{abgauge})   is natural, this gives the following commutative diagram
 $$
 \begin{CD}
 \cg (P) @>\iota >>   \cg (P_{\fsm (P_+)})  \\
 @V\simeq VV    @VV\simeq V \\
 \Omega Map_P(M,  BG)    @>>\iota >   \Omega Map_{\fsm (P_+)}(M, BGL_1(\Sigma^\infty (G_+))
 \end{CD}
 $$
 This implies the statement of Proposition \ref{iota}.  \end{proof}

 \bg
We now compute an explicit example.
 Consider the Hopf fibration $\ch$ over $S^4$: 
 $$
 S^3 \to S^7 \to S^4.
 $$
 This is a principal $SU(2)$-bundle.         Our goal is to compute the homotopy type of $hAut^b(\Sigma^\infty_{S^4} (\ch_+))$, the space of \sl based \rm homotopy automorphisms of the fiberwise suspension spectrum.      This is  defined to be the subgroup of $hAut (\Sigma^\infty_{S^4} (\ch_+))$ consisting of those homotopy automorphisms that are homotopic to the identity  on the fiber spectrum  at the basepoint $\ce_{x_0}$.
 Notice that there is a fibration sequence
\begin{equation}\label{based}
 hAut^b(\ce) \to hAut (\ce) \xr{ev} GL_1(E)
\end{equation}
 where $ev : hAut (\ce) \to GL_1(E)$ evaluates a homotopy automorphisms on the fiber over the basepoint.  From this fibration and Theorem \ref{main1}, it is easy to conclude that there is an equivalence,
 \begin{equation}\label{basedmain1}
  hAut^b(\ce) \simeq  \Omega Map_\ce^b(M, BGL_1(E))
   \end{equation}  where $Map_\ce^b$ denotes the space of based maps that classify the $E$-line bundle $\ce$.
   
   To state the result explicitly, we adopt the following  customary notation. For a based space $X$ let $QX = \lim_{n\to \infty} \Omega^n\Sigma^n X$.
   
   \begin{theorem}\label{hopf}  The group of units of the string topology spectrum of the  Hopf fibration $\ch$  has the following homotopy type (as group-like $A_\infty$-spaces):
   $$
   GL_1(\cs (\ch)) \simeq \Omega QS^0 \times \Omega^4QS^0.
   $$  In particular $\pi_0(   GL_1(\cs (\ch))) \cong \bz/2$.  
       \end{theorem}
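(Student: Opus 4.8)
The plan is to apply the equivalence (\ref{basedmain1}) together with the fibration sequence (\ref{based}) to the Hopf bundle $\ch$, reducing everything to a computation of the relevant component of a mapping space out of $S^4$. Concretely, with $E = \Sigma^\infty(SU(2)_+) = \Sigma^\infty(S^3_+)$, I would first identify $BGL_1(E)$ and the homotopy class of the classifying map $f_\ch : S^4 \to BGL_1(E)$, then compute $\Omega\, Map_\ch(S^4, BGL_1(E))$ and $\Omega\, Map^b_\ch(S^4, BGL_1(E))$, and finally reassemble via (\ref{based}).

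First I would recall the standard splitting of the units of a suspension spectrum. Since $S^3$ is connected, $GL_1(\Sigma^\infty(S^3_+)) = \Omega^\infty_1(\Sigma^\infty(S^3_+))$ is the component of the identity in $Q(S^3_+) = QS^0 \times QS^3$, and writing things additively one has an equivalence of infinite loop spaces $GL_1(\Sigma^\infty(S^3_+)) \simeq GL_1(\bs) \times Q S^3$, where $GL_1(\bs) = \Omega^\infty_1 \bs$. (The group structure on the $QS^3$ factor is the additive one, and the James-type argument here is that the diagonal/augmentation splitting of $\Sigma^\infty(S^3_+)$ is multiplicative enough after passing to units; this is a routine but real point.) Delooping, $BGL_1(\Sigma^\infty(S^3_+)) \simeq BGL_1(\bs) \times B(QS^3)$, and since $B(QS^3) = \Omega^\infty(\Sigma^\infty(S^3_+)\text{-delooping}) \simeq \Omega^{\infty}\Sigma^{\infty+1}S^3 = Q S^4$ in the appropriate range — more precisely $B(QS^3)$ is the $1$-connected cover related to $QS^4$ — one gets a factor of essentially $Q S^4$ (up to connectivity issues one must be careful with; the relevant homotopy groups are $\pi_k$ for $k \ge 1$). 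So $Map^b(S^4, BGL_1(\Sigma^\infty(S^3_+)))$ picks up, on the $B(QS^3)$ factor, the space $Map^b(S^4, QS^4) \simeq Q(\Omega^4 S^4)$-style term whose loop space contributes $\Omega^4 QS^0 \times (\text{terms from } Map^b(S^4, BGL_1(\bs)))$.

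Next, the key geometric input is that $\ch$ is \emph{stably} trivial: the classifying map $S^4 \to BSU(2) = BS^3$ composed with $B\iota : BS^3 \to BGL_1(\Sigma^\infty(S^3_+))$ lands in the component over $f_\ch$, but $\iota_*[\ch] \in \pi_4(BGL_1(\Sigma^\infty(S^3_+)))$ — equivalently the stable class of the bundle — I would show is the image of the generator of $\pi_4(BS^3) = \pi_3(S^3) = \bz$ under $\pi_3(S^3) \to \pi_3(GL_1(\bs)) = \pi_3^s$, i.e. it maps to the element of order $24$ (or is detected as such), but the point for identifying the \emph{component} is just which path component of the mapping space we sit in. Because $Map_\ch(S^4, Y)$ and $Map_{*}(S^4,Y)$ (the null component) have the same homotopy groups in positive degrees — mapping spaces out of a suspension have all components homotopy equivalent as spaces (they differ only by translation in the target's group structure when $Y$ is an infinite loop space, which $BGL_1(E)$ is not, but it is a double loop space by \cite{5author}, giving $\pi_{\ge 1}$ agreement) — the computation of $\Omega Map_\ch(S^4, BGL_1(E))$ reduces to $\Omega$ of the null component, which by adjunction-type splitting ($Map^b(S^4, \Omega^2 Z) \simeq \Omega^2 Map^b(S^4,Z)$ etc.) breaks into $\Omega Map^b(S^4, BGL_1(\bs))$ times the $\Omega^4 QS^0$ factor from the $QS^3$ summand. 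Using $BGL_1(\bs) \simeq B(\Omega^\infty_1 \bs)$ and the known stable homotopy in low degrees, $\Omega Map^b(S^4, BGL_1(\bs))$ reduces to a product of Eilenberg–MacLane-type pieces that reassemble with the $\Omega^4 QS^0$ into the claimed $\Omega QS^0 \times \Omega^4 QS^0$ after accounting for the $GL_1(\bs)$ evaluation fiber in (\ref{based}); the $\pi_0$ statement $\bz/2$ then comes from $\pi_1(QS^0) = \pi_1^s = \bz/2$ on the $\Omega QS^0$ factor (the $\Omega^4 QS^0$ factor contributes $\pi_4^s = 0$, wait — $\pi_4^s = 0$, so indeed $\pi_0$ of the product is $\pi_1^s \oplus \pi_5^s = \bz/2 \oplus 0$... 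I should double-check: $\pi_0(\Omega^4 QS^0) = \pi_4^s = 0$ but $\pi_0(\Omega QS^0) = \pi_1^s = \bz/2$), giving $\bz/2$ as claimed.

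\textbf{The main obstacle} I anticipate is making the splitting of $GL_1(\Sigma^\infty(S^3_+))$ and its delooping precise — disentangling the "multiplicative" James splitting of $\Sigma^\infty(S^3_+) \simeq \bs \vee \Sigma^\infty S^3$ from the \emph{additive} group structure used to form $GL_1$, and tracking exactly which component of $BGL_1$ the stably-trivial-but-not-geometrically-trivial bundle $\ch$ determines. A secondary subtlety is the connectivity bookkeeping in identifying $B(QS^3)$ with a (connective cover of) $QS^4$ and verifying that $Map_\ch$ and $Map_*$ really do have the same positive-degree homotopy in this non-infinite-loop but double-loop target, so that the component of $\ch$ drops out of the final answer. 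Once those identifications are nailed down, the remaining computation is a routine unravelling of adjunctions $\Omega^k Map^b(S^n,-) \simeq Map^b(S^n, \Omega^k-)$ and reading off stable homotopy groups in degrees $\le 5$.
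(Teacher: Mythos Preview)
Your overall direction is right---apply (\ref{basedmain1}) and compute the based mapping space out of $S^4$---but you are working much harder than necessary, and the ``main obstacle'' you identify is one you have created for yourself.  The paper's argument is essentially three lines.  The space $Map^b_\ch(S^4, BGL_1(E))$ is by definition a path component of $\Omega^4 BGL_1(E)$, and since $\Omega^4$ of \emph{any} based space is a $4$-fold loop space, all of its path components are homotopy equivalent via loop concatenation.  Hence the component picked out by $\ch$ is irrelevant, and
\[
\Omega\, Map^b_\ch(S^4, BGL_1(E)) \;\simeq\; \Omega^4 GL_1(\Sigma^\infty(S^3_+)).
\]
Now $GL_1(\Sigma^\infty(S^3_+))$ is a union of path components of $Q(S^3_+)$, which is again a loop space, so after applying $\Omega^4$ the distinction between components disappears once more:
\[
\Omega^4 GL_1(\Sigma^\infty(S^3_+)) \;\simeq\; \Omega^4 Q(S^3_+) \;\simeq\; \Omega^4 QS^0 \times \Omega^4 QS^3 \;\simeq\; \Omega^4 QS^0 \times \Omega QS^0,
\]
using only the \emph{additive} (stable) splitting $S^3_+ \simeq S^0 \vee S^3$.

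The key point you are missing is that you never need a multiplicative splitting of $GL_1$, nor any delooping of $BGL_1(E)$, nor any analysis of which component the classifying map lands in: the fact that $Map^b(S^4,-)=\Omega^4(-)$ is automatically a loop space does all the work.  In particular, your proposed equivalence $GL_1(\Sigma^\infty(S^3_+)) \simeq GL_1(\bs)\times QS^3$ \emph{as infinite loop spaces} is not ``routine but real''---it is genuinely problematic, since $S^3$ is not homotopy-commutative, so $\Sigma^\infty(S^3_+)$ is only an $A_\infty$ ring spectrum and $GL_1$ of it need not be an infinite loop space at all.  Similarly, your worry about whether $BGL_1(E)$ is an infinite loop space or merely a double loop space is beside the point: it is the \emph{source} $S^4$, not the target, that makes $Map^b(S^4,Y)$ a loop space.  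Drop the multiplicative splitting and the component-tracking entirely, and the argument collapses to the short computation above.  (Minor slip: in your $\pi_0$ check, $\pi_0(\Omega^4 QS^0)=\pi_4^s$, not $\pi_5^s$; both happen to vanish, so the conclusion survives.)
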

   \begin{proof}  By observation (\ref{basedmain1})  $$GL_1(\cs (\ch)) = hAut^b(\fsm \ch_+) \simeq \Omega Map^b_\iota (S^4,    BGL_1(\Sigma^\infty (S^3_+)).$$  This based mapping space is a path component of $\Omega^4 BGL_1(\Sigma^\infty (S^3_+)) \simeq \Omega^3GL_1(\Sigma^\infty (S^3_+)).$  Since all path components of a based loop space are homotopy equivalent, this says that
   $$GL_1(\cs (\ch)) \simeq \Omega^4(GL_1(\Sigma^\infty (S^3_+)))$$ as group-like $A_\infty$-spaces.  Now $GL_1(\Sigma^\infty (S^3_+))$ consists of those path components of $Q(S^3_+)$ corresponding to the units in the ring $\pi_0(Q(S^3_+)) \cong \bz$.   Again, since $Q(S^3_+))$ is a loop space all of its path components are homotopy equivalent.  Thus $\Omega^4(GL_1(\Sigma^\infty (S^3_+))) = \Omega^4(Q(S^3_+))).$   We therefore have
\begin{align}
   GL_1(\cs (\ch))  \simeq \Omega^4(Q(S^3_+)) &\simeq \Omega^4(QS^3) \times \Omega^4QS^0  \notag \\
   &\simeq  \Omega QS^0 \times \Omega^4QS^0 \notag
   \end{align}
    \end{proof}
   
   We remark that this theorem says that   $hAut(\fsm (\ch))$  is homotopy equivalent to $\Omega QS^0 \times \Omega^4QS^0$.   Now the gauge group of the principal bundle $\cg(\ch)$ has the homotopy type of $\Omega Map_u^b(S^4, BS^3)$ where $u : S^4 \to BS^3 \cong \bh \bp^\infty$ generates $\pi_4(\bh \bp^\infty) \cong \bz$.   This space is equivalent to $\Omega^4S^3$.  Proposition \ref{iota} then implies that the map $\sigma : \cg(\ch)  \to hAut (\fsm (\ch))$ is given by the stabilization map
   $$
   \sigma : \Omega^4S^3 \to \lim_{n\to \infty} \Omega \Omega^n S^n  = \Omega QS^0 \hk \Omega Q S^0 \times \Omega^4 QS^0.
   $$

 \section{Connections on an $E$-line bundle}
 
 The goal of this section is to describe what is meant by a ``connection" on an $E$-line bundle, study the category of such connections, and to prove Theorem \ref{main2}.

 \med
  Recall that for a principal bundle $G \to P \to M$, if $\ca (P)$ denotes the space of connections on $P$, then $\ca (P)$ has a natural action of the gauge group $\cg (P)$.   The following was observed by Atiyah and Bott \cite{atiyahbott}.
 
 \begin{proposition} \label{principalconnect}The homotopy orbit space, 
 $ 
E\cg (P) \times_{\cg (P)}  \ca (P)$ which we denote by $\ca (P) // \cg (P)$, is a model for the classifying space,
 
$$\ca (P) //\cg (P) \simeq B\cg (P) \simeq Map_P (M, BG).$$
\end{proposition}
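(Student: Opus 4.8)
The plan is to deduce this from the Atiyah--Bott identification $B\cg(P) \simeq Map_P(M, BG)$ (equation \eqref{abgauge}), so that the only thing requiring argument is the first equivalence $\ca(P)//\cg(P) \simeq B\cg(P)$. First I would recall that the space of connections $\ca(P)$ is an affine space: once one fixes a reference connection, any other connection differs from it by an element of $\Omega^1(M; \pud)$, the vector space of $1$-forms with values in the adjoint bundle. Hence $\ca(P)$ is contractible, and moreover the $\cg(P)$-action on it is continuous (it is the standard affine action $g\cdot A = A - (dg)g^{-1}$ in a local trivialization, which is well defined independently of trivialization precisely because of the adjoint-bundle coefficients).

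The second, and more delicate, point is that the action is free enough — or rather, that one may pass to the homotopy orbit space without worrying about fixed points. The cleanest route is to observe that for the homotopy quotient we do not need freeness at all: by definition $\ca(P)//\cg(P) = E\cg(P)\times_{\cg(P)}\ca(P)$, and since $\ca(P)$ is contractible the projection $E\cg(P)\times_{\cg(P)}\ca(P) \to E\cg(P)\times_{\cg(P)}(*) = B\cg(P)$ is a fibration with contractible fibre $\ca(P)$, hence a homotopy equivalence. (Here I am using that $\cg(P)$, as the section space of the fibrewise group $\pud \to M$ over a compact manifold, has the homotopy type of a CW complex, so the standard bar-construction model of $E\cg(P)$ and the associated Borel construction behave well.) Concatenating this with \eqref{abgauge} gives the chain of equivalences in the statement.

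I expect the main obstacle to be the bookkeeping around which flavour of ``space of connections'' one takes — smooth connections, or a homotopy-theoretically convenient surrogate — and correspondingly which model of the gauge group one uses so that $\ca(P)$ is genuinely a contractible $\cg(P)$-space of the homotopy type of a CW complex. In Atiyah--Bott's original treatment \cite{atiyahbott} this is handled in the smooth (Fréchet, or Sobolev-completed) category, where $\ca(P)$ is literally an affine Fréchet space and the gauge action is smooth; alternatively one can replace everything by the simplicial/Kan-complex models used elsewhere in this paper, in which case ``$\ca(P)$ contractible'' is again immediate and the Borel-construction argument goes through verbatim. Since the statement is only up to homotopy equivalence, either model suffices, and the proof amounts to: (i) $\ca(P)$ is contractible; (ii) the Borel construction over a contractible space collapses to $B\cg(P)$; (iii) invoke \eqref{abgauge}. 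I would write it in precisely that three-step form, citing \cite{atiyahbott} for the affineness of $\ca(P)$ and for \eqref{abgauge}.
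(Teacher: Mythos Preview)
Your argument is correct and is the standard one. Note, however, that the paper does not actually give a proof of this proposition: it is stated as a fact ``observed by Atiyah and Bott'' with a citation to \cite{atiyahbott}, and no further argument is supplied. Your three-step outline --- (i) $\ca(P)$ is affine hence contractible, (ii) the Borel construction over a contractible space is equivalent to $B\cg(P)$, (iii) invoke (\ref{abgauge}) --- is precisely the reasoning behind the Atiyah--Bott observation, so there is nothing to compare against in the paper itself.
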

 
 Theorem \ref{main2} is an analogous result, in the context of  connections on an $E$-line bundle.
 The appropriate way to think of a connection $\alpha$  on an $E$ -line bundle   $E \to \ce \to M$  is as an $E$-linear transport operator.  
 Namely, for every path $\gamma$ between points $x$ and $y$ in $M$,   the connection $\alpha$ will define an $E$-module equivalence of the fiber spectra 
 $$
 \alpha (\gamma ) : \ce_x \xr{\cong} \ce_y.
 $$
 Furthermore these equivalences   respect ``gluing of paths".  
 
 To say this more precisely, we use functorial language.
 Given a connected space $X$, let $Path (X)$ denote the \sl path category \rm of $X$.  This is a topological category, whose objects are the points in $X$,  with the topology inherited from $X$, and the morphisms from $x \to y$ are the ``Moore paths",  namely pairs, $(r, \gamma)$,  where $r > 0$ and $\gamma : [0,r] \to M$ is a path with $\gamma (0) = x$ and $\gamma (r) = y$.  These path spaces inherit the natural topologies and they have the properties that if $Mor_{Path(X)}$ is the full space of morphisms, then the source and target maps, $s, \, t : Mor_{Path(X)} \to X$ are Serre fibrations with contractible fibers.  The following is a standard result.
 
 \begin{proposition}\label{path}  The geometric realization of the path category has the weak homotopy type of $X$,
 $$
 |Path(X)| \simeq X.
 $$
 \end{proposition}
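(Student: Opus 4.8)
The plan is to exhibit a zig-zag of weak equivalences connecting $|Path(X)|$ to $X$, using the standard fact that the classifying space of a topological category is weakly equivalent to $X$ whenever the nerve is levelwise weakly equivalent to the constant simplicial space at $X$. First I would record the simplicial space $N_\bullet Path(X)$: the space of $k$-simplices is the space of composable strings of $k$ Moore paths, $N_k Path(X) = \{(x_0, (r_1,\gamma_1), \dots, (r_k,\gamma_k))\}$, with the evident face and degeneracy maps (composition uses concatenation of Moore paths, which is strictly associative — this is the reason for using Moore paths rather than ordinary paths parametrized by $[0,1]$). The object space is $N_0 Path(X) = X$.

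The key step is to produce, for each $k$, a deformation retraction of $N_k Path(X)$ onto the subspace of constant strings at a point, i.e.\ onto (a copy of) $X$ sitting inside $N_k Path(X)$ via the $k$-fold degeneracy of the identity. Concretely, one scales each Moore path $\gamma_i : [0, r_i] \to X$ down to the constant path at its source by the homotopy $(r_i,\gamma_i) \mapsto (t\,r_i,\ \gamma_i(t\,\cdot\,))$ for $t \in [0,1]$, carried out compatibly along the whole string so that the target of $\gamma_i$ stays glued to the source of $\gamma_{i+1}$; at $t=0$ the entire string collapses to the degenerate string at $x_0$. This is precisely the statement that the source map $s : Mor_{Path(X)} \to X$ is a Serre fibration with contractible fibers, iterated $k$ times. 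Thus each $N_k Path(X) \xrightarrow{\simeq} X$, and these equivalences are compatible with the simplicial structure up to the identification of the "constant string" subspaces with $X$; equivalently, the augmentation $N_\bullet Path(X) \to X$ (send every string to $x_0$, i.e.\ to its leading object) is a levelwise weak homotopy equivalence to the constant simplicial space.

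I would then invoke the realization lemma: a levelwise weak equivalence of simplicial spaces (both Reedy cofibrant, which holds here since $X$ has the homotopy type of a CW complex and the relevant inclusions of degenerate parts are cofibrations) induces a weak homotopy equivalence on geometric realizations. Since the realization of the constant simplicial space at $X$ is $X$ itself, we conclude $|Path(X)| \simeq X$.

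The main obstacle is the bookkeeping in the second step: one must check that the contracting homotopies on the $N_k$ are genuinely compatible with \emph{all} the face maps (in particular the outer faces, which delete $\gamma_1$ or $\gamma_k$ and thereby change the distinguished object), so that they assemble into an actual simplicial homotopy — or, more cheaply, verify directly that the augmentation to the constant simplicial space is a levelwise equivalence, which sidesteps coherence entirely and reduces everything to the iterated-fibration argument together with the standard realization lemma. The cofibrancy hypotheses needed for the realization lemma are routine given that $X$ is of CW type, so no real difficulty arises there.
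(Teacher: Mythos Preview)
The paper does not actually prove this proposition; it is simply asserted as ``a standard result'' with no argument supplied. Your sketch is a correct outline of the standard proof, so there is nothing in the paper to compare against.

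One small correction to your write-up is worth making. The map you call ``the augmentation $N_\bullet Path(X) \to X$'', sending a string to its leading object $x_0$, is \emph{not} a map of simplicial spaces to the constant simplicial space: it fails to commute with the face map $d_0$, which deletes the first path and therefore changes the leading object from $x_0$ to $x_1$. So you cannot literally feed that map into the realization lemma, and your proposed ``sidestep'' via this augmentation does not quite work as stated. The repair is already implicit in your earlier paragraph: the inclusion $c(X)_\bullet \hookrightarrow N_\bullet Path(X)$ of the totally degenerate strings (via iterated degeneracies of identity morphisms) \emph{is} a simplicial map, and your deformation retraction exhibits it as a levelwise weak equivalence; applying the realization lemma to this inclusion gives $X = |c(X)_\bullet| \xrightarrow{\ \simeq\ } |Path(X)|$. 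A side remark: for this to make sense the Moore paths must allow $r \geq 0$ so that identity morphisms exist; the paper's stipulation $r > 0$ appears to be a slip, since otherwise $Path(X)$ is only a semi-category and there are no degenerate strings to include.
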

 
 \med
 Now we consider another category.   Let $E$ be a ring spectrum, and let $E-line$ be the category of ``$E$-lines", as in \cite{5author}.  The objects  of $E-line$ are free, rank-one cofibrant and fibrant $E$-modules, and the morphisms are equivalences of $E$-modules. Actually in \cite{5author} $E-line$ was defined to be an ``$\infty$-category".  In any case as observed in \cite{5author}, one has the following:
 
 \med
 \begin{proposition} \label{line}   The geometric realization of the category $E-line$ is a model for the classifying space of the group of units,
 $$
 |E-line| \simeq BGL_1(E).
 $$
 \end{proposition}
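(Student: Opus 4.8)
The plan is to recognize $E-line$ as a connected $\infty$-groupoid and to identify it with the one-object groupoid whose space of morphisms is $GL_1(E)$, so that its realization is a classifying space.

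First I would check that $E-line$ is connected: by definition every object is a free, rank-one $E$-module, hence equivalent to $E$ itself, and since the morphisms of $E-line$ are exactly the $E$-module equivalences, any two objects are joined by a morphism. Therefore the inclusion of the full subcategory $\mathcal{C}$ on the single object $E$ is essentially surjective; being a full subcategory it is automatically fully faithful, so it is an equivalence of $\infty$-categories. Invoking the homotopy invariance of the realization functor (this is where the cofibrant–fibrant hypotheses on the objects of $E-line$ are used, so that mapping spaces are homotopically meaningful), we get $|E-line| \simeq |\mathcal{C}|$.

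Next I would compute $|\mathcal{C}|$. The category $\mathcal{C}$ has one object $E$ and morphism space $\mathrm{Aut}^E(E)$, the space of $E$-module self-equivalences of $E$, with composition the evident monoid structure; it is group-like because every morphism of $E-line$ is invertible. The geometric realization (bar construction) of such a one-object group-like monoid $G$ is its classifying space $BG$, so $|\mathcal{C}| \simeq B\,\mathrm{Aut}^E(E)$. It then remains to identify $\mathrm{Aut}^E(E)$ with $GL_1(E)$: the $E$-module endomorphism spectrum $End^E(E)$ is equivalent to $E$ itself (the free rank-one module acted on by multiplication), so $\mathrm{Map}_E(E,E) \simeq \Omega^\infty E$, and the components consisting of equivalences are precisely those corresponding to the units of $\pi_0(E)$; hence $\mathrm{Aut}^E(E) = GL_1(End^E(E)) = GL_1(E)$. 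Combining the three steps yields $|E-line| \simeq BGL_1(E)$.

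The main obstacle — and the reason this statement is credited to \cite{5author} rather than proved from scratch here — is the first step: one needs the framework in which $E-line$ is an $\infty$-category (or a suitably cofibrant topological category) with homotopically correct mapping spaces, together with the fact that an equivalence of such categories induces a weak homotopy equivalence on realizations. Granting that framework, everything else is the routine bookkeeping indicated above, and indeed Proposition \ref{line} is essentially a restatement of the result of \cite{5author} that $BGL_1(E)$ classifies $E$-line bundles.
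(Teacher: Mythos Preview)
Your sketch is correct, and it is in fact more than the paper provides: the paper does not prove Proposition~\ref{line} at all, but simply records it as an observation from \cite{5author}. So there is no ``paper's own proof'' to compare against. What you have written is exactly the standard argument underlying the result in \cite{5author}: $E\text{-}line$ is a connected $\infty$-groupoid, hence equivalent to the one-object groupoid on $E$, whose automorphism space is $GL_1(E)$, and the nerve of that is $BGL_1(E)$. Your closing paragraph already identifies precisely where the real work lies, namely in setting up the $\infty$-categorical framework so that mapping spaces and realizations are homotopically meaningful; granting that, the rest is indeed routine.
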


 \med
 Intuitively,  a continuous functor $\phi : Path(M) \to E-line$ determines an    $E$-line bundle over $M$ which we call $E^\phi$,  together with an $E$-linear transport operator (``connection").  Namely, the fiber spectrum   $E^\phi_x$  at a point $x \in M$ is $\phi (x)$,  and if $\gamma : x \to y$ is a path in $M$ between points $x$ and $y$, then on the level of morphisms, $\phi (\gamma ) : E^\phi (x) \to E^\phi (y)$ is an equivalence.    
 
 \med
 For the  purposes of making this precise we actually will use  a somewhat more general notion of a connection on an $E$-line bundle.   A functor $\phi$ as above has the property that if $\gamma : x \to y$ is a path from $x$ to $y$ in $M$, and $\alpha : y \to z$ is a path from $y$ to $z$, then $\phi (\alpha) \circ \phi (\gamma) :E^\phi (x) \to E^\phi (y) \to E^\phi (z)$ equals $\phi (\gamma * \alpha)$, where $\gamma * \alpha : x \to z$ is the glued path.  What we will require instead is that this strict equality is replaced by  a coherent homotopy between these $E$-linear transport operators.    We accomplish this by the use of $\infty$-categories and functors between them.  For this we follow and use the constructions and results of \cite{5author}.  There, and here, the notion of an $\infty$-category is that of a   ``quasi-category" in the sense of Joyal \cite{joyal}.
 
 Recall that an $\infty$-category in this sense is a simplicial set with additional structure that satisfy a variety of properties.   The  objects are the zero-simplices and the morphisms are the one-simplices. Given a space $X$, the singular set $Sing (X)$ is an $\infty$-category (indeed an $\infty$-groupoid) which is equivalent to the topological category  $Path (X)$.  Notice that the objects of  $Sing (X)$ are points in $X$, and the morphisms are paths.  As mentioned above,  $E-line$ was defined in \cite{5author} to be an $\infty$-category.    
 
 As described in \cite{5author}, an $\infty$-functor $\phi : Sing (X) \to   E-line$ defines an $E$-line bundle $E^\phi$ on $X$.  Notice that on the level of morphisms (one-simplices), $\phi$ defines $E$-linear transport operators along the paths.   On the level of $2$-simplices $\phi$ defines homotopies between the operator applied to a glued path and the composition of the operators.  On the level of higher simplices one sees higher homotopies.  
 We therefore make the following definition.

 \begin{definition}\label{connection}  Given an  $E$-line bundle $E  \to\ce \to M$, define the $\infty$-category of connections on $\ce$, $Conn (\ce),$ to be the $\infty$-functor category, consisting of  functors $\phi : Sing (M) \to E-line$ so that $E^\phi \cong \ce$. 
 \end{definition}
 
 \med
 Notice that in the setting of a principal bundle $P$  one can define the category of connections in an analogous way.    Namely, let $G-line$ be the category of free, transitive $G$-spaces, and equivariant homeomorphisms between them.  We may then think of a connection as $G$-equivariant  transport operator which is given by a continuous functor, $\phi : \cp(M) \to  G-line$ with the property that $\phi (x) = P_x$.    Notice that the gauge group
 $\cg (P)$ is exactly the space of natural transformations between such functors.  If $Conn (P)$ is the category of connections viewed as such functors, and natural transformations (gauge transformations) between them,  Proposition \ref{principalconnect} of Atiyah and Bott can then be interpreted as the following.
 
 \begin{proposition}\label{connp} The geometric realization of the category of connections is a model for the classifying space of the gauge group
 $$
 |Conn (P)| \simeq B\cg (P) \simeq Map_P(M, BG).
 $$
 \end{proposition}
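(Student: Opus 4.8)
The plan is to follow the pattern of the Atiyah--Bott proof of Proposition \ref{principalconnect}: realize $Conn (P)$ as the topological action groupoid of the gauge group acting on a contractible space of connections, and identify its geometric realization with the associated homotopy orbit space. Write $\ca'(P)$ for the space of continuous functors $\phi : Path(M) \to G-line$ with $\phi (x) = P_x$ for every $x \in M$; these are the objects of $Conn (P)$. The first step is to compute the morphisms. A natural transformation $\phi \Rightarrow \psi$ between two such functors is a continuous choice of $G$-equivariant automorphism $g_x$ of each fiber $P_x$ --- that is, an element $g \in \cg (P)$ --- subject to the naturality squares $g_y \circ \phi (\gamma) = \psi (\gamma) \circ g_x$ for every Moore path $\gamma : x \to y$. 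These squares impose no condition on $g$: they merely determine $\psi$ from $\phi$ and $g$ via $\psi (\gamma) = g_y \circ \phi (\gamma) \circ g_x^{-1}$. Hence the space of morphisms of $Conn (P)$ with source $\phi$ is canonically $\cg (P)$, and $Conn (P)$ is isomorphic, as a topological category, to the action groupoid $\ca'(P) // \cg (P)$ for the conjugation action of $\cg (P)$ on $\ca'(P)$.

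Since the standard point-set hypotheses hold --- $\cg (P)$ is a well-pointed topological group and $M$ is a closed manifold --- the usual model for the nerve of an action groupoid gives
$$
|Conn (P)| \;=\; |\ca'(P) // \cg (P)| \;\simeq\; E\cg (P) \times_{\cg (P)} \ca'(P).
$$
So everything reduces to showing that $\ca'(P)$ is weakly contractible. Granting this,
$$
|Conn (P)| \;\simeq\; E\cg (P) \times_{\cg (P)} \ca'(P) \;\simeq\; E\cg (P)/\cg (P) \;=\; B\cg (P),
$$
and the Atiyah--Bott equivalence (\ref{abgauge}) identifies $B\cg (P)$ with $Map_P(M,BG)$, completing the proof. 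This is precisely the sense in which the Proposition is a translation of Proposition \ref{principalconnect}, with $\ca'(P)$ playing the role of the affine space $\ca (P)$ of connections.

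The contractibility of $\ca'(P)$ is the crux and the main obstacle: it is the homotopy-theoretic analogue of the obvious contractibility of the affine space $\ca (P)$, but here it genuinely requires an argument. I would first note that $\ca'(P)$ is nonempty --- a transport functor can be assembled from a local trivialization of $P$ together with a partition of unity on $M$, or, when $G$ is a Lie group, taken to be the parallel transport of a smooth connection, which is strictly functorial under concatenation of Moore paths. Fixing such a basepoint $\phi_0$, the assignment $\gamma \mapsto \phi (\gamma) \circ \phi_0(\gamma)^{-1}$ identifies $\ca'(P)$ with a space of $\phi_0$-twisted $1$-cocycles on $Path(M)$ valued in the adjoint bundle $\pud$, which I would then contract onto the trivial cocycle by rescaling paths toward the constant path, using $|Path(M)| \simeq M$ (Proposition \ref{path}) and the contractibility of the source and target fibers of $Path(M)$. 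The delicate points are the continuity bookkeeping for the path category (Moore paths, reparametrization, continuity of functors) and checking that this rescaling contraction is compatible with concatenation; everything else --- the morphism computation, the action-groupoid identification, and the passage to $B\cg (P)$ and then to $Map_P(M,BG)$ --- is formal.
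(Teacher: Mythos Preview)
Your overall strategy matches the paper's framing exactly: the paper presents Proposition~\ref{connp} not with an independent proof but as a direct reinterpretation of Proposition~\ref{principalconnect}, and your identification of $Conn(P)$ with the action groupoid $\ca'(P)\,//\,\cg(P)$ is precisely what makes that reinterpretation precise. The computation of the morphisms (natural transformations are exactly gauge transformations, with target determined by source and $g$) is correct, and from there the reduction to the contractibility of $\ca'(P)$ is clean.

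The genuine gap is in your contractibility sketch. ``Rescaling paths toward the constant path'' does not give a homotopy through functors: if $\gamma:[0,r]\to M$ and $\alpha:[0,r']\to M$ are composable, then truncating the concatenation $\gamma*\alpha$ at parameter $s(r+r')$ is not the same as concatenating the truncations of $\gamma$ at $sr$ and of $\alpha$ at $sr'$ (the cut point lands inside one factor or the other, not at the seam). So the cocycle/functoriality condition is not preserved along your proposed homotopy, and you yourself flag ``compatibility with concatenation'' as delicate --- it is in fact the obstruction, not a bookkeeping issue. A correct argument has to use something more structural about $Path(M)$: for instance, that the inclusion of objects $M\hookrightarrow Path(M)$ is an acyclic cofibration of topological groupoids, so that lifts of a fixed object map to a functor into any fibrant groupoid (here $\ct_P$, the transport groupoid of $P$) form a contractible space.

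It is worth noting that the paper's proof of the parallel Theorem~\ref{main2} bypasses this difficulty altogether. There the argument is: since both $Sing(M)$ and $E\text{-}line$ are $\infty$-groupoids, the realization of the functor category is directly the mapping space $Map(|Sing(M)|,|E\text{-}line|)$, and one reads off $Map_\ce(M,BGL_1(E))$ immediately. The same move works here: $Path(M)$ and $G\text{-}line$ are topological groupoids with $|Path(M)|\simeq M$ and $|G\text{-}line|\simeq BG$, so $|Conn(P)|\simeq Map_P(M,BG)$ without ever isolating $\ca'(P)$ or proving it contractible. Your route is more hands-on and makes the analogy with $\ca(P)$ vivid, but it trades a one-line $\infty$-categorical fact for a contractibility statement that needs real work.
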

 
  In the category  $Conn (\ce)$ of  connections on an  $E$-line bundle $E\to \ce \to M$,   $hAut (\ce)$ acts as natural transformations between connections, and therefore   can be viewed as    morphisms in an analogous way.     From this perspective Theorem \ref{main2} is  the analogue of Propositions \ref{principalconnect} and \ref{connp}.  We now prove Theorem \ref{main2}.

 \begin{proof}     The $\infty$-category $Conn (\ce)$ is  the category of $\infty$-functors,   $Fun_\ce (Sing(M),  E-line)$.    Both $Sing(M)$ and $E-line$ are $\infty$-groupoids.  This implies that these $\infty$-categories can be regarded as spaces.  More precisely, the geometric realization map
 $$
|Conn (\ce)| = |Fun_\ce (Sing(M),  E-line)|   \to  Map_\ce(|Sing(M)|, |E-line|)
$$
is a weak homotopy equivalence.  By Proposition \ref{line} this space is equivalent to $Map_\ce(M, BGL_1(E))$.    But by Theorem \ref{main1} this is homotopy equivalent to $BhAut (\ce)$.   This proves the theorem.
 \end{proof}

 \section{Loop groups and string topology}
 
 The goal of this section is to study the stabilization map $\sigma :  \cg (P) \to  hAut (\fsm (P))$ in the case when $P$ is universal.    
  Recall from Theorem \ref{loopaut} that   $\cg (P) \simeq \Omega HE(M)$.  
 Now  if $K$ is a Lie group acting on a manifold $M$, this defines a homomorphism $K \to HE(M)$.  Then the composition
\begin{equation}\label{rho}
\rho:  \Omega K \to  \Omega HE (M) \simeq \cg (\cp (M)) \xr{\sigma} \cg (\fsm (\cp(M))   \simeq GL_1(\cs (M)) \simeq GL_1(\ltm)
\end{equation}
is a natural representation of the loop group on the string topology spectrum, as stated in Proposition \ref{equivariant}  in the introduction.  This is a  highly nontrivial representation.  Below we will compute a particular example of this representation on the level of homology.   But first, we consider this representation in the following way.  Notice the map $\rho : \Omega K \to GL_1(\ltm)$ defines a map of ring spectra, which by abuse of notation we also call $\rho$,

\begin{equation}\label{rho2}
\rho : \Sigma^\infty (\Omega K_+) \to \ltm.
\end{equation}
This map of spectra we can define explicitly in the following way.  Consider the map  
\begin{align}\label{evaluate}
\zeta : \Omega K \times M &\to LM  \\
\gamma \times x &\to \gamma_x
\end{align}
where $\gamma_x(t) = \gamma (t)\cdot x$.  Pulling back the virtual bundle $-TM$ and taking Thom spectra gives an induced map
$$
\Omega K _+ \wedge M^{-TM} \xr{\gamma} \ltm.
$$
Now $M^{-TM}$ is a ring spectrum, equivalent to the Spanier-Whitehead dual, $Map (M, S^0)$, and is equipped with a unit $S^0 \xr{u} M^{-TM}$.  This unit map may be viewed as the Pontrjagin-Thom construction for the normal bundle of an embedding $e : M \hk R^N$.  That is, given such an embedding with normal bundle $\nu_e$, one has Pontrjagin-Thom construction $S^n = \br^n \cup \infty \to M^\nu = \Sigma^n M^{-TM}$.   The unit map $u : S^0 \to M^{-TM}$ is homotopic to the desuspension of this map. We then have the composition
\begin{equation} 
\bar \rho : \Sigma^\infty (\Omega K_+) \wedge S^0 \xr{1 \wedge u}   \Sigma^\infty (\Omega K_+) \wedge  M^{-TM} \xr{\zeta}  \ltm.
\end{equation}

If $M$ has an orientation, then applying the Thom isomorphism defines a map
$$
\bar\rho_* : H_q( \Omega K)\to H_{q+n}(LM).
$$
This  map taken with rational coefficients was studied by Felix and Thomas  in \cite{felixthomas}.

\med
 We now discuss an explicit example.
Consider the standard action of $SU(2)$ on $S^4$ defined by rotation around the equator.  By  Proposition \ref{equivariant} this defines a representation
of the loop group, 
$$
\rho : \Omega SU(2) \to GL_1({LS^4}^{-TS^4})
$$
and by the above discussion, an induced map of ring spectra,
$$
\bar \rho : \Sigma^\infty (\Omega SU(2)_+) \to {LS^4}^{-TS^4}.
$$
We recall the stable homotopy type of each of these ring spectra.  The ring spectrum $$\Sigma^\infty (\Omega SU(2)_+) = \Sigma^\infty (\Omega S^3_+) \simeq \Sigma^\infty (\bigvee_{k=0}^\infty S^{2k} )$$
is the free associative ring spectrum generated by $S^2$. This is the statement of the classical James-Milnor splitting. In particular, $$H_*(\Omega SU(2)) \cong \bz [\sigma]$$ where $\sigma \in H_2(\Omega S^3) \cong \bz$ is the generator.     On the other hand, since the tangent bundle of  $S^4$ is stably trivial, the splitting result of Carlsson and Cohen \cite{CC} \cite{C2} implies the following.
$$
{LS^4}^{-TS^4} \simeq \Sigma^\infty (\Sigma^{-4}(LS^4_+)   \simeq  \Sigma^{-4} \left(\bigvee_{q=0}^\infty S^1_+\wedge_{\bz/q} S^{3q}\right).
$$
In homology, the ring structure of $H_*({LS^4}^{-TS^4})$ was calculated in \cite{CJY} as 
 
 $$H_*({LS^4}^{-TS^4}) \cong  \Lambda [b] \otimes \bz[a, v]/(a^2, ab, 2av)$$
 where
 $a \in H_{-4}({LS^4}^{-TS^4}),  b \in H_{-1}({LS^4}^{-TS^4}),$ and $v \in H_6({LS^4}^{-TS^4})$.
 
 \med
 The following is proved by tracing through the definitions, and shows the nontriviality of representation $\rho$.  
 
 \begin{proposition}
 The ring homomorphism
\begin{align}
\bar \rho : H_*(\Omega SU(2))    &\to H_*({LS^4}^{-TS^4}) \notag \\
\bz [\sigma] &\to \Lambda [b] \otimes \bz[a, v]/(a^2, ab, 2av) \notag
\end{align}
is determined by the property that 
$$
\bar \rho (\sigma) = av.
$$
\end{proposition}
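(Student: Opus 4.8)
The plan is to reduce the statement to a single non-vanishing assertion and then verify it by unwinding the construction of $\bar\rho$. Since $\bar\rho$ is a map of ring spectra, the induced map on homology is a homomorphism of graded rings $H_*(\Omega SU(2)) = \bz[\sigma]\to H_*({LS^4}^{-TS^4})$, hence is determined by the image of $\sigma\in H_2(\Omega SU(2))$. In $\Lambda[b]\otimes\bz[a,v]/(a^2,ab,2av)$ the only nonzero monomial in degree $2$ is $av$ (since $-4+6=2$ and $a^2=0$), and $2av=0$; thus $H_2({LS^4}^{-TS^4})\cong\bz/2$ is generated by $av$, and the proposition is equivalent to the assertion $\bar\rho(\sigma)\neq 0$.

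To identify $\bar\rho(\sigma)$ geometrically I would trace through $\bar\rho$, which is the composite of $1\wedge u$ with the map of Thom spectra induced by $\zeta:\Omega SU(2)\times S^4\to LS^4$. The class $\sigma$ is the Hurewicz image of the generator of $\pi_2(\Omega SU(2))=\pi_3(SU(2))=\bz$, represented by a based map $g:S^2\to\Omega SU(2)$ adjoint to a degree one map $S^3\to SU(2)$. Applying $g$, then $1\wedge u$ (which on $H_0$ of $(S^4)^{-TS^4}$ hits the class corresponding under the Thom isomorphism to the fundamental class $[S^4]$, by the Pontrjagin--Thom description of the unit $u$), then the Thom-spectrum map of $\zeta$, and comparing Thom isomorphisms on source and target, one finds that under the equivalence $H_2({LS^4}^{-TS^4})\cong H_6(LS^4)$ the element $\bar\rho(\sigma)$ corresponds to $\zeta'_*[S^2\times S^4]\in H_6(LS^4)$, where $\zeta'=\zeta\circ(g\times\mathrm{id}):S^2\times S^4\to LS^4$ is $(s,x)\mapsto(t\mapsto g(s)(t)\cdot x)$ and $[S^2\times S^4]$ is the fundamental class. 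So it remains to show that $\zeta'_*[S^2\times S^4]$ is the nonzero element of $H_6(LS^4)\cong\bz/2$.

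I would first recall how this $\bz/2$ arises. In the Serre spectral sequence of the evaluation fibration $\Omega S^4\to LS^4\xrightarrow{e}S^4$, the only term in total degree $6$ is $E^\infty_{0,6}=\operatorname{coker}(d^4\colon E^4_{4,3}\to E^4_{0,6})$; the constant-loop section forces $d^4$ to vanish on the bottom row, and its surviving value on $E^4_{4,3}=\bz\langle[S^4]\otimes w_3\rangle$ is $\pm\chi(S^4)=\pm 2$ times the generator $w_3^2$ of $H_6(\Omega S^4)=\bz$ (here $H_*(\Omega S^4)=\bz[w_3]$, $|w_3|=3$). Hence $H_6(LS^4)\cong\bz/2$ is generated by $j_*(w_3^2)$ for the fibre inclusion $j:\Omega S^4\hookrightarrow LS^4$. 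I would then compute $\zeta'_*[S^2\times S^4]$ from the geometry of the rotation action: away from its two fixed ``poles'' the $SU(2)$-action on $S^4$ is free with orbits $\cong SU(2)$, and identifying an orbit with $SU(2)$ by the orbit map turns $\zeta'$ into the composite of $g\times\mathrm{id}:S^2\times SU(2)\to\Omega SU(2)\times SU(2)$ with the translation equivalence $\Omega SU(2)\times SU(2)\xrightarrow{\simeq}L(SU(2))$, while over each pole $\zeta'$ is the constant loop; comparing the spectral sequence of $e$ with this orbit picture --- the $[S^4]$-direction being an integration over the latitude parameter, which compactifies at the poles and so reintroduces the factor $2=\chi(S^4)$ --- shows $\zeta'_*[S^2\times S^4]=j_*(w_3^2)\bmod 2$, which is exactly $av$ and not $0$. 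Equivalently, one checks that $(\zeta')^*$ is nonzero on the unique nonzero class of $H^6(LS^4;\bz/2)$ using the ring and Steenrod structure of $H^*(LS^4;\bz/2)$ from \cite{CJY}.

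The main obstacle is this last step, namely confirming that the mod $2$ class survives rather than bounds; the phenomenon is intrinsically $2$-primary, since rationally $av=0$ and so the rational result of F\'elix--Thomas \cite{felixthomas} gives no information, and one must see that the ``half'' of the spherical family $\zeta'$ that the Euler-class differential would kill is precisely what is traced out. I expect the cleanest rigorous route is the spectral-sequence comparison above, or else a direct computation with the Carlsson--Cohen splitting ${LS^4}^{-TS^4}\simeq\Sigma^{-4}\bigvee_q S^1_+\wedge_{\bz/q}S^{3q}$ of \cite{CC}, in which the $\bz/2$ in degree $2$ is the $q=2$ wedge summand $\simeq\Sigma^2(\bs/2)$ and one follows $g$ onto its bottom cell.
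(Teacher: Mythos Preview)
The paper itself offers no proof beyond the sentence preceding the proposition: ``The following is proved by tracing through the definitions, and shows the nontriviality of representation $\rho$.''  So there is nothing in the paper for your attempt to be compared against; you have supplied substantially more detail than the authors did, and your overall strategy is exactly what that sentence suggests.

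Your reduction is correct and cleanly argued: since $\bar\rho$ is a ring map and $H_*(\Omega SU(2))=\bz[\sigma]$, everything is determined by $\bar\rho(\sigma)$; the target group in degree $2$ is indeed $\bz/2$ generated by $av$; and under the Thom isomorphism the question becomes whether $\zeta'_*[S^2\times S^4]$ is the nonzero element of $H_6(LS^4)\cong\bz/2$.  Your identification of $H_6(LS^4)$ with the cokernel of a $d^4$ that is multiplication by $2$ is also correct.

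The place where your argument is genuinely incomplete is the one you flag yourself.  Because the fiber map $S^2\to\Omega S^4$ is nullhomotopic (the equator $S^3\hookrightarrow S^4$ being null), the induced map of Serre spectral sequences over $S^4$ vanishes on $E^2_{p,q}$ for $q>0$; hence the class $[S^2\times S^4]\in E^\infty_{4,2}$ of the source is sent to zero on the associated graded and must \emph{drop filtration} into $E^\infty_{0,6}(LS^4)$ to survive.  Detecting such a filtration drop is precisely the subtle point, and your orbit-geometry sketch (``integration over the latitude parameter \dots reintroducing the factor $2=\chi(S^4)$'') is suggestive but not yet an argument.  The alternative route via $(\zeta')^*$ on $H^6(LS^4;\bz/2)$ also needs care: the naive candidate $y_3^2$ for the generator vanishes (divided powers in $H^*(\Omega S^4;\bz/2)$), so one must produce a less obvious description of the class to pull back.

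If you want to close the gap, the cleanest route is probably the one you mention last: in the Carlsson--Cohen splitting the entire $\bz/2$ in degree $2$ comes from the $q=2$ summand $\Sigma^{-4}(S^1_+\wedge_{\bz/2}(S^3)^{\wedge 2})$, and one can trace the James--Milnor wedge summand $S^2$ of $\Sigma^\infty(\Omega S^3_+)$ directly into this piece.  Alternatively, an honest obstruction-theoretic computation of the secondary class recording the filtration drop (the primary obstruction to deforming $\zeta'$ to the constant-loop section over the $6$-skeleton) lands in $H^6(S^2\times S^4;\pi_6 S^4)$ and is governed by the Whitehead product $[\iota_4,\iota_4]\in\pi_7(S^4)$, which is twice a generator and hence nonzero mod $2$.
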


 \section{String topology as the linearization of the gauge group}
 
 The goal of this section is to prove Theorem \ref{main3} as stated in the introduction.
 \begin{proof}  Let $G \to P \to M$ be a principal bundle.  Consider the gauge group functor $\cg : Spaces_M \to Spectra$ and the string topology functor $\cs : Spaces_M \to Spectra$ as defined in the introduction prior to the statement of the theorem.  We first prove that $\cs$ is a linear functor. 
 
  In \cite{umkehr} Cohen and Klein proved a fibered version of Brown representability (see Theorem A.4 of that paper).  Although the  terminology used there is a bit different, that theorem says that  a contravariant homotopy functor $\cf : Spaces_M \to Spectra$  is  linear   if and only if there is a parameterized spectrum $\ce \to M$ such that $\cf (g : Y \to M) \simeq \Gamma_Y(g^*(\ce))$.  If we let $\ce = \fsm (\pud_+)$, then this implies that the string topology functor is linear.
 
 We now show that the string topology functor $\cs$ is the \sl universal \rm linear approximation to the gauge theory functor,  $\cg (g : Y \to M) = \Sigma^\infty ((\Gamma_Y(g^*(P^{Ad}))_+)$.  Let $\cl : Spaces_M \to Spectra$ be a linear homotopy functor admitting a natural transformation
 $$
 \tau : \cg \to \cl.
 $$
By the fiberwise Brown representability theorem there is a parameterized spectrum $\ce \to M$ such that $\cl (g:Y \to M) = \Gamma_Y(g^*(\ce))$.   Here we are assuming that $\ce$ is  a fibrant parameterized spectrum.

So for every object $g:Y \to M$ in $Spaces_M$,  $\tau$ induces a map of spectra,
$$
\tau_Y:  \Sigma^\infty (\Gamma_Y(g^*(P^{Ad}))_+)   \to \Gamma_Y(g^*(\ce)).
$$

 Let $\Omega^\infty_M\ce \to M$ be the fibration where the total space is the zero-space of the fibrant parameterized spectrum. 
  Then on the level of spaces, $\tau$ induces a map of infinite loop spaces,
  $$
  \tau_Y :  \Omega^\infty \Sigma^\infty (\Gamma_Y(g^*(P^{Ad}))_+)   \to \Gamma_Y(g^*(\Omega^\infty_M\ce)).
 $$
 Restricting to $\Gamma_Y(g^*(P^{Ad})) \subset \Omega^\infty \Sigma^\infty (\Gamma_Y(g^*(P^{Ad}))_+) $,  we have an induced map of spaces,
 $$
 \tilde \tau_Y  :   \Gamma_Y(g^*(P^{Ad}))  \to   \Gamma_Y(g^*(\Omega^\infty_M\ce)).
 $$
 
 Now consider the object $p: P^{Ad} \to M$ in $Spaces_M$.  We then get a map
 $$
 \tilde \tau_{P^{Ad}} :   \Gamma_{P^{Ad}}(p^*(P^{Ad})) \to \Gamma_{P^{Ad}}(p^*(\Omega^\infty_M\ce)).
 $$
 Written in different notation, this is a map
 $$
  \tilde \tau_{P^{Ad}} : Map_M(P^{Ad}, P^{Ad}) \to Map_M(P^{Ad}, \Omega^\infty_M \ce)),
  $$
 where $Map_M$ means morphisms in the category $Spaces_M$.  
 
 Now consider the image under this map of the identity map, $  \tilde \tau_{P^{Ad}} (id) : P^{Ad} \to \Omega^\infty_M \ce).$  Since this is a morphism in $Spaces_M$ this defines a map (well defined up to homotopy) of fibrations

 $$
 \begin{CD}
 P^{Ad}   @>\tilde \tau_{P^{Ad}} (id) >>  \Omega^\infty_M\ce  \\
 @VVV    @VVV \\
 M  @>>= > M.
 \end{CD}
 $$  This map extends to the universal fiberwise infinite loop space  over $M$,
 
 $$
 \begin{CD}
  \Omega^\infty_M\Sigma^\infty_M (P^{Ad}_+)   @>\tau_{P^{Ad}} (id) >>  \Omega^\infty_M\ce  \\
 @VVV    @VVV \\
 M  @>>= > M.
 \end{CD}
 $$

 But this is equivalent to a map of fiberwise spectra over $M$,
 $$
 \begin{CD}
 \Sigma^\infty_M (P_+)   @>\tau_{P^{Ad}} (id) >>  \ce  \\
 @VVV    @VVV \\
 M  @>>= > M.
 \end{CD}
 $$
 Now $\fsm (\pud)$ is the parameterized spectrum representing the string topology functor, and $\ce$ is the parameterized spectrum representing the linear functor $\cl$.  This therefore defines a natural transformation $\bar \tau : \cs \to \ce$.  By construction it is clear that this map satisfies the   universality property in the statement that the string topology functor $\cs$ is the \sl linearization \rm of the gauge theory functor $\cg$.   \end{proof}

 \end{document}